\documentclass[11pt]{article}
\usepackage[a4paper,margin=30mm]{geometry}
\usepackage{amsmath,amssymb,amsthm,mathtools}
\usepackage{array}
\usepackage{booktabs}
\usepackage[hidelinks]{hyperref}
\usepackage{microtype}

\newtheorem{theorem}{Theorem}[section]
\newtheorem{proposition}[theorem]{Proposition}
\newtheorem{lemma}[theorem]{Lemma}
\newtheorem{corollary}[theorem]{Corollary}
\theoremstyle{definition}
\newtheorem{definition}[theorem]{Definition}
\newtheorem{question}[theorem]{Question}
\theoremstyle{remark}
\newtheorem{remark}[theorem]{Remark}

\hypersetup{
    pdftitle={Large Sidon Subsets and Pair-Sum Multiplicities of Distinct Multinomial Coefficients},
    pdfauthor={Felix Huber},
    pdfsubject={Sidon subsets and additive pair-sum collisions of distinct multinomial coefficient values},
    pdfkeywords={Sidon set, multinomial coefficient, pair sum, additive collision, representation multiplicity, additive energy, integer partition}
}

\title{Large Sidon Subsets and Pair-Sum Multiplicities\\of Distinct Multinomial Coefficients}
\author{Felix Huber\\[-1mm]
\small Independent Researcher\\[-1mm]
\small Maifhofhalde 20, 6006 Luzern, Switzerland\\[-1mm]
\small \texttt{felix.68@gmx.ch}\\[-1mm]
\small ORCID: \href{https://orcid.org/0009-0005-1568-1579}{\texttt{0009-0005-1568-1579}}}
\date{}

\begin{document}
\maketitle
\begin{abstract}
For a positive integer $n$, let $\mathcal{M}_n$ be the set of distinct multinomial coefficient values $n!/(p_1!\cdots p_t!)$, where $(p_1,\ldots,p_t)$ ranges over the integer partitions of $n$. We study two complementary aspects of the additive structure of $\mathcal{M}_n$: the maximum cardinality $s(n)$ of a Sidon subset and the multiplicities of unordered pair sums in the full set. A product embedding, strongly Sidon subsets, and estimates for prime partitions give
\[
\liminf_{n\to\infty}
\frac{\log(s(n)/n)\log\log n}{\sqrt{\log n}}
\geq\frac{\pi}{\sqrt3}.
\]
Long arithmetic progressions and separated copies of them yield the complementary Sidon-defect bound
\[
\liminf_{n\to\infty}
\frac{|\mathcal{M}_n|-s(n)}{n^{3/2}\sqrt{\log n}}
\geq\frac{2}{3\sqrt3}.
\]
Writing $r_n(t)$ for the number of unordered representations $t=x+y$ with $x,y\in\mathcal{M}_n$, we study the collision excess $C(n)$, the number $D(n)$ of multiply represented sums, the maximum multiplicity $\mu(n)$, and the cumulative profile $T(n,k)$. The same separated progressions give
\[
\liminf_{n\to\infty}\frac{C(n)}{n^2\log n}\geq\frac18,
\qquad
\liminf_{n\to\infty}\frac{D(n)}{n^{3/2}\sqrt{\log n}}
\geq\frac{4}{3\sqrt3},
\]
and
\[
\liminf_{n\to\infty}\frac{\mu(n)}{\sqrt{n\log n}}\geq\frac12.
\]
We further obtain a scaled lower envelope for the full multiplicity profile, an additive-energy bound, stabilization results with respect to the number of variables, and an explicit family of trinomial collisions. Exact values of $s(n)$ through $n=16$ and pair-sum statistics through $n=20$ are reported together with reproducible computational material.
\end{abstract}

\noindent\textbf{Keywords:} Sidon set; multinomial coefficient; pair sum; additive collision; representation function; additive energy; integer partition; prime partition.
\section{Introduction}\label{sec:intro}
Throughout the paper, $\log$ denotes the natural logarithm. A finite set $S$ of integers is a \emph{Sidon set} if every equality
\[
x+y=u+v,\qquad x,y,u,v\in S,
\]
is trivial, meaning that the unordered pairs $\{x,y\}$ and $\{u,v\}$ coincide. Equivalently, all unordered pair sums, with equal summands allowed, are distinct. Sidon sets are classical objects of additive combinatorics; see O'Bryant's survey~\cite{OBryant}.

For each $n\geq1$, let $\mathcal{M}_n$ denote the set of distinct multinomial coefficient values associated with the integer partitions of $n$; the formal definition is given in Section~\ref{sec:multinomial-sets}. Its cardinality was studied by Andrews, Knopfmacher, and Zimmermann~\cite{AKZ}. We consider
\[
s(n)=\max\{|S|:S\subseteq\mathcal{M}_n,\ S\text{ is Sidon}\}.
\]
The first exact values are
\[
1, 2, 3, 5, 6, 8, 11, 15, 20, 25, 30, 39, 45, 55, 66, 83
\]
for $1\leq n\leq16$.

A complementary viewpoint retains the full set $\mathcal{M}_n$ and measures its additive collisions. For an integer $t$, put
\[
r_n(t)=\#\{(x,y):x,y\in\mathcal{M}_n,\ x\leq y,\ x+y=t\}.
\]
Representations are therefore counted by unordered pairs, with equal summands allowed. Define
\begin{align*}
C(n)&=\sum_t\max(r_n(t)-1, 0),&
D(n)&=\#\{t:r_n(t)\geq2\},\\
\mu(n)&=\max_t r_n(t),&
T(n,k)&=\#\{t:r_n(t)\geq k\}\qquad(k\geq2).
\end{align*}
Pair-sum representation functions are standard objects in additive number theory~\cite{Nathanson}. The quantities $s(n)$ and $C(n),D(n),\mu(n),T(n,k)$ describe opposite but closely related aspects of the same additive structure: the former asks how much of $\mathcal{M}_n$ can be retained while eliminating all collisions, whereas the latter measure the collisions present in the complete set.

The common structural tool is a product embedding for multinomial coefficient sets. It supports two amplification mechanisms. Strongly Sidon subsets can be placed at sufficiently separated multiplicative scales, producing large Sidon subsets. Long arithmetic progressions can likewise be replicated at separated scales. Their element intervals force independent omissions from a Sidon subset, while their pair-sum intervals support disjoint contributions to the complete representation profile.

The first main result concerns large Sidon subsets.
\begin{theorem}\label{thm:main-lower}
We have
\[
\liminf_{n\to\infty}
\frac{\log(s(n)/n)\log\log n}{\sqrt{\log n}}
\geq\frac{\pi}{\sqrt3}.
\]
Equivalently, for every $\varepsilon>0$,
\[
s(n)\geq n\exp\left(\left(\frac{\pi}{\sqrt3}-\varepsilon\right)
\frac{\sqrt{\log n}}{\log\log n}\right)
\]
for all sufficiently large $n$. In particular, $s(n)=\Omega(n\log n)$, and $s(n)/n$ grows faster than every fixed power of $\log n$.
\end{theorem}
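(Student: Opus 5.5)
The plan is to combine two ingredients through the product embedding of Section~\ref{sec:multinomial-sets}. The embedding comes from concatenating partitions: if $(p_1,\dots,p_t)\vdash a$ and $(q_1,\dots,q_u)\vdash b$, then
\[
\frac{(a+b)!}{p_1!\cdots p_t!\,q_1!\cdots q_u!}=\binom{a+b}{a}\cdot\frac{a!}{\prod p_i!}\cdot\frac{b!}{\prod q_j!},
\]
so $\binom{a+b}{a}\,\mathcal{M}_a\cdot\mathcal{M}_b\subseteq\mathcal{M}_{a+b}$.

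I split $n=m+k$ with $k=k(n)$ small. The first factor, from $\mathcal{M}_m$, supplies about $n$ elements. The second factor, from $\mathcal{M}_k$, supplies the superpolynomial gain $\exp((\pi/\sqrt3-\varepsilon)\sqrt{\log n}/\log\log n)$, coming from partitions of $k$ into primes. I will assume the number of prime partitions satisfies $\log P(k)\sim 2\pi\sqrt{k/(3\log k)}$. This is the estimate the abstract alludes to, and it is classical (Roth--Szekeres type).

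\textbf{Step 1 (the $k$-factor).} Distinct prime partitions $\lambda\vdash k$ give distinct values $k!/\prod_{p\in\lambda}p!$. To see this, compare the multiplicity of the largest prime part: the prime $q$ divides $\prod p!$ exactly $\sum_{p\geq q}$ times up to a controlled correction, and peeling off primes from the top recovers $\lambda$. So $\mathcal{M}_k$ contains a set $B$ of size $P(k)$.

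\textbf{Step 2 (the $m$-factor).} I need a set $A\subseteq\mathcal{M}_m$ with $|A|\gg m$ that is \emph{strongly Sidon}: every nontrivial relation $x+y=u+v$ fails even after the elements are multiplied by arbitrary factors from a bounded-ratio window. The natural candidate uses binomial values $\binom{m}{j}$, or two-part partitions, restricted to a range of $j$ where consecutive ratios are large. If the ratios between distinct elements exceed $2\max B/\min B$ (or a similar threshold), then any sum of two elements of $A\cdot B$ is dominated by its larger term. A relation $x b+y b'=u c+v c'$ then forces the $A$-parts to match, and afterwards the $B$-parts.

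\textbf{Step 3 (the main obstacle).} Steps 1 and 2 pull against each other. The set $B$ can have a huge ratio spread, up to $k!$, while $A$ must keep linearly many elements with multiplicative gaps larger than that spread. I would handle this with separated scales. Instead of one product, take $B$ inside a narrow multiplicative window: by pigeonhole, a positive proportion (up to a factor $e^{O(k\log k)}$) of the prime-partition values lie in a window of ratio $2$. Then choose $A$ as about $m/C$ binomials $\binom{m}{j}$ with $j$ in a range where consecutive ratios are at least $4$, which holds for $j\le m/5$, say. Within $B$ itself one needs Sidon-ness, so I would further thin $B$ greedily to a Sidon subset. Pair sums inside a window number at most $|B|^2$, and the elements are integers of size at most $k!$, so a greedy or Erd\H{o}s--Tur\'an-type thinning costs at most a square root. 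I would check whether this loses the constant $\pi/\sqrt3$. The target $2\pi\sqrt{k/(3\log k)}=(\pi/\sqrt3)\sqrt{\log n}/\log\log n$ suggests taking $k\asymp\log n/(\log\log n)$, with any polynomial-in-$k!$ losses being $e^{O(k\log k)}=n^{o(1)}$... but that is too costly relative to $\sqrt{\log n}$. So I must take $k$ smaller, around $(\log n)/(4\log\log n)$-scale adjusted so that $\sqrt{k/\log k}$ matches. The losses then have to be controlled as $e^{o(\sqrt{k/\log k})}$, not $e^{O(k\log k)}$. Controlling these losses is where I expect the real work. If the strongly Sidon property of Section~\ref{sec:multinomial-sets} is available with only polynomial loss in $k$, the bound follows.

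\textbf{Step 4 (conclusion).} We get $s(n)\geq|A||B'|\geq (c\,n)\,P(k)e^{-o(\sqrt{k/\log k})}$. Optimizing $k$ against $n$ gives the stated $\liminf$, since $\log P(k)\sim(\pi/\sqrt3)\sqrt{\log n}/\log\log n$ for the chosen $k$. The consequences $s(n)=\Omega(n\log n)$ and superpolylogarithmic growth of $s(n)/n$ are then immediate.
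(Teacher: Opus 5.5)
\paragraph{There is a genuine gap, at the step you flag yourself.}
Your Step~4 needs a Sidon subset $B'$ of the prime-partition values in $\mathcal{M}_k$ with $|B'|\geq P(k)e^{-o(\sqrt{k/\log k})}$, taken at $k\approx\log n/(4\log\log n)$. Nothing in the proposal supplies this. Greedy thinning guarantees only about $|B|^{1/3}$. The Koml\'os--Sulyok--Szemer\'edi/Abbott theorem guarantees $\gg\sqrt{|B|}$, and for arbitrary sets the square root is sharp (take an arithmetic progression). A near-lossless Sidon subset would therefore need arithmetic information about these particular values, which you do not have.

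The separation threshold in Step~2 is also wrong. From $xb+yb'=xc+yc'$ with $y=Rx$ you get $b-c=R(c'-b')$. This is excluded for general $B$ only when $R$ exceeds the spread $\max B-\min B$. The general separated-dilation argument needs consecutive ratios above $2\max B$, not $2\max B/\min B$. So confining $B$ to a dyadic window buys nothing, and binomials $\binom{m}{j}$ with ratios $\geq4$ do not separate. In addition, dyadic pigeonholing costs only a factor $O(\log k!)=O(k\log k)$, not $e^{O(k\log k)}$. That miscount is what pushed you toward shrinking $k$.

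\paragraph{The missing observation.}
The square-root loss is exactly what the constant $\pi/\sqrt3$ pays for, so you should keep $k$ at full scale. Take $r$ maximal with $4r!\leq n$. Then $r\sim\log n/\log\log n$ and $\sqrt{r/\log r}\sim\sqrt{\log n}/\log\log n$.
\begin{itemize}
\item Your Step~1 is Theorem~1 of Andrews--Knopfmacher--Zimmermann, $M(r)\geq p_{\mathbb{P}}(r)$.
\item Abbott's theorem then gives a Sidon subset of $\mathcal{M}_r$ of size $\gg\sqrt{p_{\mathbb{P}}(r)}=\exp\bigl((\pi/\sqrt3+o(1))\sqrt{r/\log r}\bigr)$. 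The paper additionally passes to a strongly Sidon subset $S$, losing a factor~$3$.
\item For the separating factors, use falling factorials $y_j=m!/(m-j)!\in\mathcal{M}_m$ with $m=n-r$ and $0\leq j\leq m-2r!$. Their consecutive ratios $m-j$ exceed $2r!\geq2\max S$, so the separated-dilation lemma applies.
\item This gives $m-2r!+1\geq n/2-r$ copies. The product embedding places $\binom nr\bigcup_j y_jS$ inside $\mathcal{M}_n$.
\end{itemize}
Hence $s(n)\geq(\tfrac12-o(1))\,n\,|S|$, which is exactly the stated $\liminf$.
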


The arithmetic-progression construction also gives an obstruction in the opposite direction.
\begin{theorem}\label{thm:defect}
With $M(n)$ as in~\eqref{eq:M-cardinality},
\[
\liminf_{n\to\infty}
\frac{M(n)-s(n)}{n^{3/2}\sqrt{\log n}}
\geq\frac{2}{3\sqrt3}.
\]
\end{theorem}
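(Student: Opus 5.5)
The plan is to exhibit, inside $\mathcal{M}_n$, a large family of pairwise disjoint arithmetic progressions, and to use the fact that a Sidon set can occupy only about the square root of any arithmetic progression. Concretely, I would rely on the separated arithmetic-progression construction that the introduction says comes from the product embedding. For each large $n$ it should give pairwise disjoint progressions $P_1,\dots,P_m\subseteq\mathcal{M}_n$ with lengths $L_1,\dots,L_m\to\infty$. Disjointness should follow from the separation of multiplicative scales.

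The first ingredient is the classical bound for Sidon sets in progressions. If $P=\{a+kd:0\le k<L\}$ and $S\subseteq P$ is Sidon, then the affine map $a+kd\mapsto k$ preserves the Sidon property. Hence, by Erdős--Turán (see~\cite{OBryant}), $|S\cap P|\le \sqrt{L}+O(L^{1/4})$. This bound depends only on $S\cap P$, so it applies to each $P_i$ separately. Therefore any Sidon $S\subseteq\mathcal{M}_n$ satisfies
\[
M(n)-|S|\;\ge\;\sum_{i=1}^m |P_i\setminus S|\;\ge\;\sum_{i=1}^m\bigl(L_i-\sqrt{L_i}-O(L_i^{1/4})\bigr).
\]
Taking $S$ of size $s(n)$ gives a lower bound for $M(n)-s(n)$ of the form $\sum_i L_i\,(1-o(1))$, provided the lengths tend to infinity uniformly. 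If some progressions are short, I would simply discard them; the paper will have to check that they contribute negligibly to $\sum L_i$.

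The remaining work, and the main obstacle, is the asymptotic count
\[
\sum_{i} L_i=\Bigl(\tfrac{2}{3\sqrt3}+o(1)\Bigr)n^{3/2}\sqrt{\log n}.
\]
I expect the construction to index the progressions by a parameter $j$, namely the scale at which a basic progression is embedded. The length $L(j)$ should be governed by a constraint of the form $j\lesssim\sqrt{n\log n}$, arising from when the copies remain separated and distinct inside $\mathcal{M}_n$. Summing then amounts to evaluating an integral of the shape $\int_0^{J}(\cdots)\,dj$ with $J\sim c\sqrt{n\log n}$. The factor $\tfrac{2}{3}$ would come from integrating a square-root profile, and the $\sqrt3$ from optimizing the cutoff. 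I would first compute this sum with the exact parameter ranges from the construction. I would then verify that boundary effects are $o(n^{3/2}\sqrt{\log n})$: the losses from discarded short progressions, from the $\sqrt{L_i}$ terms, and from any overlaps removed to enforce disjointness. Taking $\liminf$ then yields the stated constant.
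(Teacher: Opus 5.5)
Your reduction is sound and matches the paper's Proposition~\ref{prop:defect-finite}: take pairwise disjoint progressions inside $\mathcal{M}_n$; a Sidon set meets each in only $O(\sqrt L)$ elements, so the defect is at least $\sum_i (L_i-F(L_i))$. That is the easy half, though. The rate $n^{3/2}\sqrt{\log n}$ and the constant $2/(3\sqrt3)$ both come from the part you defer to ``the construction''. Your guess about how that construction works is wrong, so as written the proposal does not prove the theorem.

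Three concrete steps are missing.
\begin{itemize}
\item First, you need a long progression in some $\mathcal{M}_r$. The paper takes the partition of $N(q)=\sum_{p\le q}p\lfloor\log q/\log p\rfloor$ with $e_p=\lfloor\log q/\log p\rfloor$ parts equal to $p$ for each prime $p\le q$. Replacing one part $p$ by the two parts $p-1,1$ multiplies the coefficient by $p$. Every $j\le q$ factors as $\prod p^{f_p}$ with $f_p\le e_p$, so this gives $\lambda\{1,\dots,q\}\subseteq\mathcal{M}_{N(q)}$.
\item Second, you need the cost of this progression: by the prime number theorem, $N(q)\sim q^2/(2\log q)$.
\item Third, you need replication. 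Put $m=n-N(q)$ and use the scales $y_j=m!/(m-j)!\in\mathcal{M}_m$ for $0\le j\le m-q$, whose consecutive ratios $m-j$ exceed $q$. The product embedding then gives $h=n-N(q)-q+1$ pairwise disjoint copies $a_j\{1,\dots,q\}$ in $\mathcal{M}_n$.
\end{itemize}

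So the copies are indexed by $j$ up to about $n$, not up to $\sqrt{n\log n}$. Separation is governed by $m-j>q$, not by $j\lesssim\sqrt{n\log n}$. All copies have the same length $q$, and nothing is integrated.

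The real trade-off is between the common length $q$ and the number of copies, since longer progressions use up $N(q)$ of the budget $n$. Choose $q\sim c\sqrt{n\log n}$ with $0<c<1$. Then $\log q\sim\frac12\log n$, so $N(q)\sim c^2n$ and $h\sim(1-c^2)n$. The defect is therefore at least $h\bigl(q-F(q)\bigr)=\bigl(c(1-c^2)+o(1)\bigr)n^{3/2}\sqrt{\log n}$. Maximizing $c(1-c^2)$ at $c=1/\sqrt3$ gives $2/(3\sqrt3)$. Both the $\sqrt3$ and the factor $\frac23=1-c^2$ come from this single optimization, not from integrating a square-root profile.

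Without the prime-partition progression, the estimate for $N(q)$, and this length-versus-count optimization, your outline yields neither the order $n^{3/2}\sqrt{\log n}$ nor the constant.
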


The same progressions provide detailed lower bounds for the pair-sum statistics. With $N(q)$ as in~\eqref{eq:Nq-definition}, the finite result is the following.
\begin{theorem}\label{thm:finite-main}
Let $q\geq3$ and $n\geq N(q)+q$, and put
\[
h=n-N(q)-q+1.
\]
Then
\begin{align*}
C(n)&\geq h\frac{(q-1)(q-2)}2,\\
D(n)&\geq h(2q-5),\\
T(n,k)&\geq h(2q-4k+3)
\end{align*}
for every $2\leq k\leq\lceil q/2\rceil$. Moreover,
\[
\mu(n)\geq\left\lceil\frac q2\right\rceil.
\]
\end{theorem}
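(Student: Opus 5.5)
The plan is to reduce all four bounds to an elementary count on a single arithmetic progression, and then multiply by the number of copies of that progression that the product embedding places inside $\mathcal{M}_n$ at separated scales. Concretely, I take from the construction behind~\eqref{eq:Nq-definition} a $q$-term progression
\[
P=\{a,a+d,\ldots,a+(q-1)d\}\subseteq\mathcal{M}_{N(q)},\qquad d>0.
\]
I then invoke the product embedding: appending parts to a partition of $N(q)$ multiplies every value of $\mathcal{M}_{N(q)}$ by a common factor, giving a map $\mathcal{M}_{N(q)}\to\mathcal{M}_n$. This should produce $h=n-N(q)-q+1$ scaling factors $\lambda_1<\cdots<\lambda_h$ with $\lambda_jP\subseteq\mathcal{M}_n$. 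The factors must also be separated, so that the pair-sum intervals
\[
[2\lambda_ja,\ 2\lambda_j(a+(q-1)d)]
\]
are pairwise disjoint. I expect them to come from the $h$ admissible ways of distributing the remaining $n-N(q)$ units, the ``$-q+1$'' being exactly the room needed so that consecutive factors differ by more than the ratio $(a+(q-1)d)/a$.

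The second step is the local count inside one copy $\lambda P$. The sum $\lambda(2a+sd)$ with $0\le s\le 2q-2$ is represented by the unordered pairs $\{i,k\}$ with $i+k=s$ and $0\le i\le k\le q-1$. The number of such pairs is
\[
\rho(s)=\lfloor s/2\rfloor+1 \ \ (s\le q-1),\qquad \rho(s)=\rho(2q-2-s).
\]
From this formula the four quantities follow directly:
\begin{itemize}
\item Summing $\rho(s)-1$ gives $\binom{q+1}{2}-(2q-1)=(q-1)(q-2)/2$.
\item $\rho(s)\ge2$ holds exactly for $2\le s\le 2q-4$, which is $2q-5$ values.
\item $\rho(s)\ge k$ holds exactly for $2k-2\le s\le 2q-2k$, which is $2q-4k+3$ values whenever $k\le\lceil q/2\rceil$.
\item At $s=q-1$ we get $\rho(q-1)=\lfloor(q-1)/2\rfloor+1=\lceil q/2\rceil$.
\end{itemize}

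The third step passes from the copies to the full set. Since $\lambda_jP\subseteq\mathcal{M}_n$, we have $r_n(t)\ge\rho(s)$ at each $t=\lambda_j(2a+sd)$, because extra representations only help. The functions $x\mapsto\max(x-1,0)$ and $\mathbf 1[x\ge k]$ are nondecreasing, so each copy contributes at least its local amount to $C(n)$, $D(n)$ and $T(n,k)$. Because the sum intervals of distinct copies are disjoint, these contributions are attained at distinct integers $t$ and therefore add up, giving the factor $h$. The bound on $\mu(n)$ needs only one copy.

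The main obstacle is the first step: producing exactly $h=n-N(q)-q+1$ copies with disjoint pair-sum intervals. If the earlier section supplies this as a lemma (separated copies via the product embedding), I will simply cite it. Otherwise I would build the factors by appending a single part $m$ together with $1$'s, chosen so that the multiplicative gap between consecutive factors is at least $n$. I would then check that this gap exceeds $(a+(q-1)d)/a\le q$, which is where the hypothesis $n\ge N(q)+q$ enters.
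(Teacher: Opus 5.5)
Your proposal is correct and is essentially the paper's proof. You can cite Proposition~\ref{prop:separated-progressions} directly: it supplies exactly $h$ copies $a_j\{1,\ldots,q\}\subseteq\mathcal{M}_n$ with $a_{j+1}>qa_j$ and pairwise disjoint pair-sum intervals, and your profile count, monotonicity and disjointness steps match Lemma~\ref{lem:AP-profile}, Corollary~\ref{cor:AP-statistics} and the paper's argument.

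Two small corrections concern only your fallback construction. The required gap there is $y_{j+1}/y_j=m-j\geq q+1>q$, not ``at least $n$'', which the partitions $(m-j,1^j)$ cannot achieve. Also, the bound $(a+(q-1)d)/a\leq q$ needs $d\leq a$; this holds because the paper's progression has $a=d=\lambda$.
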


Optimizing $q$ gives three asymptotic consequences.
\begin{theorem}\label{thm:asymptotic-main}
We have
\[
\liminf_{n\to\infty}\frac{C(n)}{n^2\log n}\geq\frac18,
\qquad
\liminf_{n\to\infty}\frac{D(n)}{n^{3/2}\sqrt{\log n}}
\geq\frac{4}{3\sqrt3},
\]
and
\[
\liminf_{n\to\infty}\frac{\mu(n)}{\sqrt{n\log n}}\geq\frac12.
\]
\end{theorem}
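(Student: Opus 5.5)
The plan is to deduce Theorem~\ref{thm:asymptotic-main} from Theorem~\ref{thm:finite-main} by choosing $q=q(n)$ optimally for each statistic. The one external input is an upper asymptotic for $N(q)$ as defined in~\eqref{eq:Nq-definition}, namely
\[
N(q)\leq(1+o(1))\frac{q^2}{2\log q}\qquad(q\to\infty).
\]
I expect this to be available from the construction of $N(q)$. It is the size of the sum of the first $\asymp q$ primes, and the prime number theorem gives $\sum_{p\leq x}p\sim x^2/(2\log x)$. If the paper only records $N(q)$ through an explicit prime sum, I would first derive this asymptotic from the prime number theorem. This is the step I regard as the main obstacle. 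The constants $\tfrac18$, $\tfrac{4}{3\sqrt3}$ and $\tfrac12$ are sharp consequences of exactly the leading term $q^2/(2\log q)$, so any loss of a constant factor in bounding $N(q)$ would spoil them.

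Put $L=\tfrac12\log n$. Every choice of $q$ below has $\log q=(1+o(1))L$, so $N(q)\leq(1+o(1))q^2/(2L)$. In each case I write $q=\lfloor\sqrt{c\,n\log n}\rfloor$ with a constant $c$. Then $N(q)\leq(1+o(1))\,c\,n$ and
\[
h=n-N(q)-q+1\geq(1-c-o(1))\,n
\]
for $c<1$. For $\mu(n)$ the bound $\mu(n)\geq\lceil q/2\rceil$ does not involve $h$, so we only need $n\geq N(q)+q$. I would take $c=1-\varepsilon$, which gives $\mu(n)\geq\tfrac12\sqrt{(1-\varepsilon)n\log n}$ for large $n$. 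Letting $\varepsilon\to0$ yields the constant $\tfrac12$.

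For $C(n)$ the bound is $h(q-1)(q-2)/2\approx(1-c)\,n\cdot c\,n\log n/2$. This is maximized at $c=\tfrac12$, giving $n^2\log n/8$. For $D(n)$ the bound is $h(2q-5)\approx 2(1-c)\sqrt c\,n^{3/2}\sqrt{\log n}$, maximized at $c=\tfrac13$. There it equals $2\cdot\tfrac23\cdot\tfrac1{\sqrt3}=\tfrac{4}{3\sqrt3}$. These optima are the calculus optima of $q(n-q^2/(2L))$ and $q^2(n-q^2/(2L))$, namely $q^2=2nL/3$ and $q^2=nL$.

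In each case the remaining work is routine bookkeeping. One checks that $q\geq3$ and $n\geq N(q)+q$ for large $n$. The lower-order terms, namely the $-q+1$ in $h$, the $-5$ and the $(q-1)(q-2)$ versus $q^2$, are $o$ of the main terms. The $o(1)$ in the $N(q)$ asymptotic is absorbed because $c<1$ is fixed. Passing to $\liminf$ then gives the three inequalities.
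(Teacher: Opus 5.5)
Your proposal is correct and is essentially the paper's proof: both apply Theorem~\ref{thm:finite-main} with $q\asymp\sqrt{n\log n}$ and optimize the constant. Your $c$ is the square of the paper's $c$ in $q=\lfloor c\sqrt{n\log n}\rfloor$, so your optima $c=\tfrac12$ and $c=\tfrac13$ match its $c=1/\sqrt2$ and $c=1/\sqrt3$.

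The step you flag as the main obstacle is exactly Lemma~\ref{lem:Nq-asymptotic}. The paper proves it from $\sum_{p\le x}p\sim x^2/(2\log x)$, together with an $O(q)$ bound for the prime-power terms. One small correction: $N(q)$ is essentially the sum of the primes up to $q$ (about $q/\log q$ primes), not of the first $\asymp q$ primes. Your formula does use the right quantity.
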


The complete profile has a nontrivial lower envelope. For $0\leq\alpha<1/2$, define
\[
c_\alpha=\frac{2\alpha+\sqrt{4\alpha^2+3}}3,
\qquad
\Phi(\alpha)=2(c_\alpha-2\alpha)(1-c_\alpha^2).
\]
\begin{theorem}\label{thm:profile-limit}
Let $k_n\geq2$ be integers such that
\[
\frac{k_n}{\sqrt{n\log n}}\longrightarrow\alpha,
\qquad 0\leq\alpha<\frac12.
\]
Then
\[
\liminf_{n\to\infty}
\frac{T(n,k_n)}{n^{3/2}\sqrt{\log n}}
\geq\Phi(\alpha).
\]
In particular, $\Phi(0)=4/(3\sqrt3)$ and $\Phi(\alpha)\to0$ as $\alpha\uparrow1/2$.
\end{theorem}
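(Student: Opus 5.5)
The plan is to derive Theorem~\ref{thm:profile-limit} directly from the finite bound $T(n,k)\geq h(2q-4k+3)$ of Theorem~\ref{thm:finite-main}, by choosing $q$ as a suitable multiple of $\sqrt{n\log n}$. The input I need is an asymptotic upper bound for $N(q)$ from~\eqref{eq:Nq-definition}. I assume the paper provides
\[
N(q)\leq(1+o(1))\frac{q^2}{2\log q}\qquad(q\to\infty),
\]
since this is the bound that makes the constants $\tfrac12$ in the $\mu(n)$ part and $4/(3\sqrt3)$ in the $D(n)$ part of Theorem~\ref{thm:asymptotic-main} come out. Only this upper bound is needed, not a matching lower bound.

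\textbf{Step 1: choice of $q$.} Fix $\alpha\in[0,1/2)$ and set $c=c_\alpha$. Take $q_n=\lfloor c\sqrt{n\log n}\rfloor$. Since $\log q_n=(\tfrac12+o(1))\log n$, the assumed bound gives $N(q_n)\leq(1+o(1))c^2 n$.

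\textbf{Step 2: verify the hypotheses of Theorem~\ref{thm:finite-main}.} At $\alpha=1/2$ one has $c=(1+\sqrt4)/3=1$, and $c_\alpha$ is increasing in $\alpha$, so $c<1$ for $\alpha<1/2$. Hence $N(q_n)+q_n\leq(c^2+o(1))n<n$ for large $n$, and also $q_n\geq3$.

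The inequality $c>2\alpha$ is equivalent to $\sqrt{4\alpha^2+3}>4\alpha$, that is, $12\alpha^2<3$, which holds since $\alpha<1/2$. Therefore
\[
k_n=(\alpha+o(1))\sqrt{n\log n}<\Bigl(\tfrac c2-\delta\Bigr)\sqrt{n\log n}\leq\lceil q_n/2\rceil
\]
for some fixed $\delta>0$ and all large $n$. This covers $\alpha=0$ as well, where $k_n\geq2$ may be bounded. So $2\leq k_n\leq\lceil q_n/2\rceil$ and Theorem~\ref{thm:finite-main} applies.

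\textbf{Step 3: evaluate the bound.} We get $h=n-N(q_n)-q_n+1\geq(1-c^2-o(1))n$. We also get
\[
2q_n-4k_n+3=(2c-4\alpha+o(1))\sqrt{n\log n}.
\]
Multiplying, and using that both factors are positive,
\[
T(n,k_n)\geq(1-o(1))\,2(c-2\alpha)(1-c^2)\,n^{3/2}\sqrt{\log n}=(\Phi(\alpha)-o(1))\,n^{3/2}\sqrt{\log n},
\]
which gives the liminf statement.

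\textbf{Step 4: why $c_\alpha$ and the special values.} As a remark, $c_\alpha$ is the maximizer of $g(c)=2(c-2\alpha)(1-c^2)$ on $(2\alpha,1)$. Indeed, $g'(c)=0$ reduces to $3c^2-4\alpha c-1=0$, whose positive root is $c_\alpha$. Finally, $\Phi(0)=2\cdot\tfrac1{\sqrt3}\cdot\tfrac23=\tfrac{4}{3\sqrt3}$. As $\alpha\uparrow1/2$ we have $c_\alpha\to1$, so the factor $1-c_\alpha^2$ tends to $0$ and hence $\Phi(\alpha)\to0$.

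\textbf{Main obstacle.} The only nontrivial input is the asymptotic upper bound on $N(q)$, which is presumably proved earlier via prime-partition or progression-length estimates. The issue is that it must be used at $q\asymp\sqrt{n\log n}$ with the $o(1)$ error uniform along the sequence $q_n$. If the paper's estimate is instead stated in inverted form, as the largest $q$ with $N(q)\leq x$, I would first convert it to the form above before running Steps~1--3.
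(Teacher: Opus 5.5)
Your proof is correct and follows the paper's argument. You apply $T(n,k)\geq h(2q-4k+3)$ from Theorem~\ref{thm:finite-main} with $q=\lfloor c\sqrt{n\log n}\rfloor$, and Lemma~\ref{lem:Nq-asymptotic} already supplies the bound $N(q)\sim q^2/(2\log q)$ you assumed, so no conversion is needed. The only difference is cosmetic: you set $c=c_\alpha$ from the start and verify $2\alpha<c_\alpha<1$ explicitly, whereas the paper fixes an arbitrary $c\in(2\alpha,1)$ and optimizes afterwards.
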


Section~\ref{sec:multinomial-sets} establishes the common product embedding. Sections~\ref{sec:sidon-lifting}--\ref{sec:lower} develop the Sidon constructions and prove Theorem~\ref{thm:main-lower}. Section~\ref{sec:progressions} gives the shared arithmetic-progression construction, which is applied to the Sidon defect in Section~\ref{sec:defect}. The second main part begins in Section~\ref{sec:collision}; Sections~\ref{sec:profile}--\ref{sec:asymmult} establish the finite and asymptotic multiplicity results. Sections~\ref{sec:variables} and~\ref{sec:trinomial} treat the number of variables and explicit trinomial identities. Section~\ref{sec:computations} collects computations, OEIS connections, and open problems.
\section{Multinomial coefficient sets and product embeddings}
\label{sec:multinomial-sets}

For $n\geq1$, define
\[
\mathcal{M}_n
=
\left\{
\frac{n!}{p_1!\cdots p_t!}:
 p_1+\cdots+p_t=n,\quad
 p_1\geq\cdots\geq p_t\geq1
\right\}.
\]
Thus $\mathcal{M}_n$ is the set of distinct multinomial coefficient values with upper entry $n$. We write
\begin{equation}\label{eq:M-cardinality}
M(n)=|\mathcal{M}_n|.
\end{equation}
The function $M(n)$ was studied by Andrews, Knopfmacher, and Zimmermann~\cite{AKZ}. In particular, they proved that $M(n)=o(p(n))$, where $p(n)$ denotes the ordinary partition function.

For finite sets $A_1,\ldots,A_\ell$ of positive integers, write
\[
A_1\cdots A_\ell
=
\{a_1\cdots a_\ell:a_i\in A_i\}.
\]
The basic structural property used throughout the paper is the following product embedding.

\begin{proposition}[Product embedding]
\label{prop:product-embedding}
Let $n_1,\ldots,n_\ell$ be positive integers satisfying
\[
n_1+\cdots+n_\ell=n.
\]
Then
\[
\binom{n}{n_1,\ldots,n_\ell}
\mathcal{M}_{n_1}\cdots\mathcal{M}_{n_\ell}
\subseteq
\mathcal{M}_n.
\]
\end{proposition}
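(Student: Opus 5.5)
The plan is to prove the inclusion elementwise: a product of one element from each $\mathcal{M}_{n_i}$, scaled by the outer multinomial coefficient, is itself a single multinomial coefficient with upper entry $n$. Fix an arbitrary element of the left-hand side. It has the form
\[
\binom{n}{n_1,\ldots,n_\ell}\prod_{i=1}^{\ell}\frac{n_i!}{p^{(i)}_1!\cdots p^{(i)}_{t_i}!},
\]
where for each $i$ the tuple $(p^{(i)}_1,\ldots,p^{(i)}_{t_i})$ is a partition of $n_i$ with positive parts. Since $\binom{n}{n_1,\ldots,n_\ell}=n!/(n_1!\cdots n_\ell!)$, the factors $n_i!$ cancel telescopically and the product equals
\[
\frac{n!}{\prod_{i=1}^{\ell}\prod_{j=1}^{t_i}p^{(i)}_j!}.
\]

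Next, concatenate all the parts $p^{(i)}_j$ for $1\le i\le\ell$ and $1\le j\le t_i$ into one multiset. Their sum is $\sum_i n_i=n$, and all of them are at least $1$. After sorting them in nonincreasing order, we obtain a partition $(q_1,\ldots,q_t)$ of $n$ with $t=\sum_i t_i$. Because the denominator is a product over the multiset of parts, reordering does not change it. Hence the element equals $n!/(q_1!\cdots q_t!)$, which lies in $\mathcal{M}_n$ by definition.

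No step is a real obstacle. The only care needed is bookkeeping: the positivity of the $n_i$ ensures each $\mathcal{M}_{n_i}$ is defined and nonempty, and the sorting step ensures the combined tuple meets the ordering convention $p_1\ge\cdots\ge p_t\ge1$ in the definition of $\mathcal{M}_n$. Conceptually, the argument is the familiar refinement identity: choosing a set partition of $[n]$ into blocks of sizes $n_i$, then refining each block, is the same as partitioning $[n]$ directly into the refined blocks.
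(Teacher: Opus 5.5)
Your proof is correct and is essentially the paper's argument: concatenate the chosen partitions of the $n_i$ into a partition of $n$, and note that after the $n_i!$ cancel, the resulting multinomial coefficient equals $\binom{n}{n_1,\ldots,n_\ell}x_1\cdots x_\ell$. You also re-sort the concatenated parts to meet the nonincreasing convention, a detail the paper leaves implicit.
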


\begin{proof}
For each $i$, choose a partition
\[
p_{i,1}+\cdots+p_{i,t_i}=n_i
\]
representing an element
\[
x_i
=
\frac{n_i!}{p_{i,1}!\cdots p_{i,t_i}!}
\in\mathcal{M}_{n_i}.
\]
Concatenating these partitions gives a partition of $n$. The corresponding multinomial coefficient is
\[
\frac{n!}
{\displaystyle\prod_{i=1}^{\ell}\prod_{j=1}^{t_i}p_{i,j}!}
=
\binom{n}{n_1,\ldots,n_\ell}
x_1\cdots x_\ell,
\]
which proves the inclusion.
\end{proof}

In particular, if $r+m=n$, then
\[
\binom{n}{r}\mathcal{M}_r\mathcal{M}_m
\subseteq
\mathcal{M}_n.
\]
Since $1\in\mathcal{M}_m$, this also gives
\[
\binom{n}{r}\mathcal{M}_r
\subseteq
\mathcal{M}_n.
\]

The special case $m=1$ will be used repeatedly.

\begin{corollary}
\label{cor:successive-embedding}
For every $n\geq1$,
\[
(n+1)\mathcal{M}_n
\subseteq
\mathcal{M}_{n+1}.
\]
\end{corollary}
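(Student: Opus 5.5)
The inclusion $(n+1)\mathcal{M}_n\subseteq\mathcal{M}_{n+1}$ is the special case $\ell=2$, $n_1=n$, $n_2=1$ of Proposition~\ref{prop:product-embedding}, applied with total $n+1$. The plan is to read it off from that proposition, or from the remark that immediately follows it.

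Set $r=n$ and $m=1$ in the consequence $\binom{n+1}{r}\mathcal{M}_r\mathcal{M}_m\subseteq\mathcal{M}_{n+1}$ of the proposition. The binomial factor is
\[
\binom{n+1}{n}=n+1 .
\]
The set $\mathcal{M}_1$ is $\{1\}$, since the only partition of $1$ is $(1)$, which gives the value $1!/1!=1$. Therefore $\mathcal{M}_n\mathcal{M}_1=\mathcal{M}_n$, and the inclusion reads $(n+1)\mathcal{M}_n\subseteq\mathcal{M}_{n+1}$, as claimed.

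As a direct check, take $x=n!/(p_1!\cdots p_t!)\in\mathcal{M}_n$ and append a part equal to $1$. This gives the partition $(p_1,\ldots,p_t,1)$ of $n+1$, already in nonincreasing order because $p_t\ge 1$. Its multinomial value is
\[
\frac{(n+1)!}{p_1!\cdots p_t!\,1!}=(n+1)x .
\]

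There is no real obstacle here. The only points to verify are that $\mathcal{M}_1=\{1\}$ and that appending a part $1$ keeps the parts in nonincreasing order, and both are immediate.
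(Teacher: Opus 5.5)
Your proposal is correct and matches the paper's proof, which applies Proposition~\ref{prop:product-embedding} with $n_1=n$, $n_2=1$ and uses $\mathcal{M}_1=\{1\}$. Your direct check of appending a part equal to $1$ is the same argument written out for this case.
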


\begin{proof}
Apply Proposition~\ref{prop:product-embedding} with $n_1=n$ and $n_2=1$, noting that $\mathcal{M}_1=\{1\}$.
\end{proof}

Consequently, every additive relation among elements of $\mathcal{M}_n$ may be lifted to $\mathcal{M}_{n+1}$ by multiplication by $n+1$. More generally, the product embedding allows additive configurations in lower-order multinomial coefficient sets to be transferred, combined, and separated inside sets of higher order. The two principal applications developed below are the construction of large Sidon subsets and the construction of many disjoint families of pair-sum collisions.
\section{Sidon subsets and elementary lifting}\label{sec:sidon-lifting}
The elementary product embedding already gives useful explicit lower bounds for $s(n)$.
\begin{lemma}\label{lem:adjoin-one}
Let $S$ be a Sidon set of positive integers, and let $q\geq3$. Then
\[
\{1\}\cup qS
\]
is Sidon.
\end{lemma}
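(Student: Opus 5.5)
The plan is to sort any relation in $\{1\}\cup qS$ by how many of its terms equal $1$, and to read that count off from residues modulo $q$. Put $T=\{1\}\cup qS$, and suppose
\[
x+y=u+v,\qquad x,y,u,v\in T.
\]
We must show that $\{x,y\}=\{u,v\}$ as unordered pairs. Every element of $qS$ is a positive multiple of $q\geq3$, so $1\notin qS$. Hence each element of $T$ is either exactly $1$, which is $\equiv1\pmod q$, or lies in $qS$, which is $\equiv0\pmod q$.

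Let $a\in\{0,1,2\}$ be the number of entries among $x,y$ equal to $1$, and let $b\in\{0,1,2\}$ be the number among $u,v$. Then $x+y\equiv a\pmod q$ and $u+v\equiv b\pmod q$. Since $q\geq3$, the residues $0,1,2$ are distinct modulo $q$, so the equality forces $a=b$. This is the one place where the hypothesis $q\geq3$ enters. For $q=2$ the counts $a=0$ and $a=2$ would be indistinguishable, and indeed $1+1=2=0+2$-type coincidences could occur.

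The remaining step is a case check on the common value $a=b$.
\begin{itemize}
\item If $a=b=2$, both pairs are $\{1,1\}$, so the relation is trivial.
\item If $a=b=1$, write $\{x,y\}=\{1,qs\}$ and $\{u,v\}=\{1,qs'\}$. Cancelling $1$ gives $qs=qs'$, so $s=s'$ and the pairs coincide.
\item If $a=b=0$, all four terms lie in $qS$. Dividing by $q$ gives a relation $s_1+s_2=s_3+s_4$ in $S$. Because $S$ is Sidon, $\{s_1,s_2\}=\{s_3,s_4\}$, and multiplying back by $q$ gives $\{x,y\}=\{u,v\}$.
\end{itemize}

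There is no substantive obstacle. The only point needing care is the residue separation in the second paragraph, which is exactly where $q\geq3$ is used.
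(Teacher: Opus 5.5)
Your proof is correct and is essentially the paper's argument: the paper also separates the sum types $1+1$, $1+qx$, $qx+qy$ by their residues $2,1,0$ modulo $q$, notes that the middle type is trivially injective, and reduces the last type to the Sidon property of $S$ after dividing by $q$. One small correction to your aside: for $q=2$ no such coincidence actually occurs, since $1+1=2<4\leq 2s_1+2s_2$ whenever $s_1,s_2\geq1$ (a relation like $1+1=0+2$ would need $0\in S$). So only your residue argument needs $q\geq3$, not the lemma itself.
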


\begin{proof}
The pair sums are of the three forms
\[
1+1,\qquad 1+qx,\qquad qx+qy.
\]
Their residues modulo $q$ are respectively $2, 1, 0$, so sums of different forms cannot coincide. Equalities within the second form are trivial, while an equality within the third form reduces, after division by $q$, to an equality of pair sums in $S$.
\end{proof}

\begin{proposition}\label{prop:elementary-lifting}
For $2\leq r\leq n$,
\[
s(n)\geq s(r)+n-r.
\]
Consequently, $s(n)-n$ is nondecreasing for $n\geq2$.
\end{proposition}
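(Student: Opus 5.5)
It suffices to prove the single step $s(n+1)\geq s(n)+1$ for every $n\geq2$. Iterating this from $n=r$ up to $n-1$ gives $s(n)\geq s(r)+(n-r)$. The monotonicity statement is then immediate: $s(n+1)-(n+1)\geq s(n)-n$.

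For the single step, let $S\subseteq\mathcal{M}_n$ be a Sidon subset with $|S|=s(n)$. By Corollary~\ref{cor:successive-embedding}, $(n+1)S\subseteq(n+1)\mathcal{M}_n\subseteq\mathcal{M}_{n+1}$. Also $1\in\mathcal{M}_{n+1}$, coming from the partition with the single part $n+1$. Put $q=n+1$; since $n\geq2$, we have $q\geq3$. Lemma~\ref{lem:adjoin-one} then says that $\{1\}\cup qS$ is a Sidon subset of $\mathcal{M}_{n+1}$.

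It remains to check that this set has $s(n)+1$ elements. Multiplication by $q$ is injective, so $|qS|=|S|$. Every element of $qS$ is a positive multiple of $q\geq3$, so none of them equals $1$, and $1$ is a genuinely new element. Hence $s(n+1)\geq s(n)+1$.

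No step here is a real obstacle. The only points to watch are the hypothesis $q\geq3$ in Lemma~\ref{lem:adjoin-one}, which is why the range starts at $r\geq2$, and the trivial case $r=n$, which needs no argument.
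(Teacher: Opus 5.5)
Your proof is correct and is the paper's argument: take a maximum Sidon subset $S\subseteq\mathcal{M}_k$, place $(k+1)S$ inside $\mathcal{M}_{k+1}$ via the product embedding, adjoin $1$ using Lemma~\ref{lem:adjoin-one} with $q=k+1\geq3$, and iterate from $k=r$ to $k=n-1$. Your explicit check that $1\notin qS$, which guarantees the cardinality really increases by one, is a detail the paper leaves implicit.
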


\begin{proof}
If $S\subseteq\mathcal{M}_k$ is Sidon, Proposition~\ref{prop:product-embedding} and Lemma~\ref{lem:adjoin-one} show that
\[
\{1\}\cup(k+1)S\subseteq\mathcal{M}_{k+1}
\]
is Sidon. Iterating from $k=r$ to $k=n-1$ proves the result.
\end{proof}

The iterated construction is explicit.

\begin{proposition}\label{prop:explicit-lifting}
Let $2\leq r\leq n$, and let $S\subseteq\mathcal{M}_r$ be Sidon. Then
\[
L_{n,r}(S)=\left\{1,n,n(n-1),\ldots,\frac{n!}{(r+1)!}\right\}\cup\frac{n!}{r!}S
\]
is a Sidon subset of $\mathcal{M}_n$, of cardinality $|S|+n-r$.
\end{proposition}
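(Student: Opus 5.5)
The proof is an induction on $n$ with $r$ fixed, making explicit the iteration already used in Proposition~\ref{prop:elementary-lifting}. For $n=r$ the set $L_{r,r}(S)$ is just $\frac{r!}{r!}S=S$: the list $1,n,\ldots,n!/(r+1)!$ is empty, since its last term corresponds to the product $n(n-1)\cdots(r+2)$ and there are $n-r$ terms in total. So the claim holds trivially. Before starting, I will fix the indexing convention. The listed numbers are
\[
\frac{n!}{k!}\quad\text{for } k=n,n-1,\ldots,r+1,
\]
which gives exactly $n-r$ elements $1,n,n(n-1),\ldots,n!/(r+1)!$. I will check this against the displayed list before proceeding.

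For the inductive step, assume $L_{n,r}(S)\subseteq\mathcal{M}_n$ is Sidon of cardinality $|S|+n-r$. Corollary~\ref{cor:successive-embedding} gives $(n+1)L_{n,r}(S)\subseteq\mathcal{M}_{n+1}$, and $1\in\mathcal{M}_{n+1}$. Hence
\[
\{1\}\cup(n+1)L_{n,r}(S)\subseteq\mathcal{M}_{n+1},
\]
and this set is Sidon by Lemma~\ref{lem:adjoin-one} with $q=n+1\geq3$, which holds since $n\geq r\geq2$. The key identification is then
\[
\{1\}\cup(n+1)L_{n,r}(S)=L_{n+1,r}(S).
\]
This follows because $(n+1)\cdot\frac{n!}{k!}=\frac{(n+1)!}{k!}$ and $(n+1)\frac{n!}{r!}S=\frac{(n+1)!}{r!}S$, while the new element $1=\frac{(n+1)!}{(n+1)!}$ supplies the $k=n+1$ term.

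For the cardinality, multiplication by $n+1$ is injective. It remains to see that $1\notin(n+1)L_{n,r}(S)$, which holds because every element of $(n+1)L_{n,r}(S)$ is a multiple of $n+1\geq2$. The size therefore increases by exactly one, giving $|S|+(n+1)-r$. The same argument shows inductively that the listed elements are pairwise distinct and disjoint from $\frac{n!}{r!}S$, which is needed for the cardinality claim to be exact. Alternatively, one can note directly that $\frac{n!}{k!}$ for $k>r$ is not divisible by $\frac{n!}{r!}$, whereas every element of $\frac{n!}{r!}S$ is.

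The main obstacle is purely bookkeeping: matching the endpoint of the displayed list to the induction so that no term is lost or duplicated. The Sidon property itself is entirely inherited from Lemma~\ref{lem:adjoin-one}.
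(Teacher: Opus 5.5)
Your proof is correct and follows the paper's argument: the paper also obtains $L_{n,r}(S)$ by iterating $U\mapsto\{1\}\cup(k+1)U$ for $k=r,\ldots,n-1$, via the product embedding and Lemma~\ref{lem:adjoin-one}. Your version additionally spells out the indexing $n!/k!$ for $r<k\leq n$, the identification $\{1\}\cup(n+1)L_{n,r}(S)=L_{n+1,r}(S)$, and the cardinality count, all of which the paper leaves implicit.
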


\begin{proof}
This is the result of iterating $U\mapsto\{1\}\cup(k+1)U$ for $k=r,\ldots,n-1$.
\end{proof}

A simple special case gives a superincreasing family.

\begin{corollary}\label{cor:factorial-family}
For every $n\geq1$,
\[
\{1,n,n(n-1),\ldots,n!\}
\]
is a Sidon subset of $\mathcal{M}_n$. In particular, $s(n)\geq n$.
\end{corollary}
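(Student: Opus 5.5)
This is the special case $r=1$ of the lifting construction. The only wrinkle is that Proposition~\ref{prop:explicit-lifting} is stated for $r\geq2$, so I would redo the iteration by hand starting from $k=1$.

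Start from $S=\mathcal{M}_1=\{1\}$, which is trivially Sidon. Iterate the map $U\mapsto\{1\}\cup(k+1)U$ for $k=1,\ldots,n-1$, exactly as in the proof of Proposition~\ref{prop:elementary-lifting}. At each step two facts are needed:
\begin{itemize}
\item Corollary~\ref{cor:successive-embedding} gives $(k+1)\mathcal{M}_k\subseteq\mathcal{M}_{k+1}$. Together with $1\in\mathcal{M}_{k+1}$, this shows the new set lies in $\mathcal{M}_{k+1}$.
\item Lemma~\ref{lem:adjoin-one} with $q=k+1$ shows the new set is Sidon, provided $k+1\geq3$.
\end{itemize}

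The lemma does not apply at the first step $k=1$, where $q=2$. There I would check directly that $\{1,2\}=\{1\}\cup2\{1\}\subseteq\mathcal{M}_2=\{1,2\}$ is Sidon: its pair sums are $2,3,4$, which are distinct. After that every step has $q\geq3$ and the lemma applies.

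By induction the set after lifting to level $n$ is
\[
\left\{1,\ n,\ n(n-1),\ \ldots,\ \frac{n!}{2!},\ \frac{n!}{1!}\right\}=\{1,n,n(n-1),\ldots,n!\}.
\]
Its elements $n!/j!$ for $j=n,n-1,\ldots,1$ are strictly increasing for $n\geq2$, so they are distinct. Hence the set is Sidon of size $n$, and $s(n)\geq n$.

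Two small cases remain. For $n=1$ the set is $\{1\}$, since $1=1!$. For $n=2$ it is $\{1,2\}$, handled above.

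As an alternative direct check: each element is at least twice the previous one, so the set is superincreasing. In such a set the largest summand in $x+y$ is determined by the size of the sum, which also gives the Sidon property.

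The only delicate point is the base step $q=2$, which I would handle by the direct check above.
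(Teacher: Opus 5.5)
Your proof is correct, but your main argument differs from the paper's. The paper proves the corollary directly. It writes $f_k=n!/(n-k)!$ for $0\leq k\leq n-1$ and gets membership from the explicit partitions $(n-k,1^k)$. The Sidon property then comes from $f_k/f_{k-1}=n-k+1\geq2$ together with positivity. Suppose $f_i+f_j=f_u+f_v$ with $i\leq j$, $u\leq v$, $j\leq v$. If $j<v$, then $f_i+f_j\leq2f_{v-1}\leq f_v<f_u+f_v$, a contradiction. So $j=v$, and cancellation finishes. This is exactly the superincreasing check you offer as an alternative at the end.

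Your induction via $U\mapsto\{1\}\cup(k+1)U$ instead exhibits the family as an output of the lifting machinery. For $n\geq3$ it is precisely $L_{n,2}(\mathcal{M}_2)$ from Proposition~\ref{prop:explicit-lifting} with $S=\mathcal{M}_2=\{1,2\}$. So your hand check at $q=2$ amounts to verifying that $\mathcal{M}_2$ is Sidon, and your route makes explicit why the paper calls the corollary a special case.

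The two routes trade off differently. The paper's argument is self-contained and uniform in $n$; the final ratio $n!/(n!/2)=2$ needs no separate treatment. Yours reuses Corollary~\ref{cor:successive-embedding} and Lemma~\ref{lem:adjoin-one}, and therefore inherits the hypothesis $q\geq3$.

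That base step is not actually delicate. For any Sidon set $S$ of positive integers, $\{1\}\cup2S$ is Sidon: $1+1=2$ lies below every $2x+2y\geq4$, and the sums $1+2x$ are odd while the sums $2x+2y$ are even.
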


\begin{proof}
Write
\[
f_k=\frac{n!}{(n-k)!},\qquad 0\leq k\leq n-1.
\]
Each $f_k$ belongs to $\mathcal{M}_n$, corresponding to the partition $(n-k,1,\ldots,1)$. Moreover,
\[
\frac{f_k}{f_{k-1}}=n-k+1\geq2.
\]
Suppose that $f_i+f_j=f_u+f_v$, where $i\leq j$, $u\leq v$, and, without loss of generality, $j\leq v$. If $j<v$, then
\[
f_i+f_j\leq2f_{v-1}\leq f_v<f_u+f_v,
\]
a contradiction. Hence $j=v$, and cancellation gives $f_i=f_u$, so $i=u$.
\end{proof}

The exact value $s(16)=83$, proved computationally in Section~\ref{sec:computations}, gives the concrete global estimate
\[
s(n)\geq n+67,\qquad n\geq16.
\]
\section{Strongly Sidon sets and separated dilations}\label{sec:strong}
The elementary lifting above adds one element at each step. To obtain a superlinear lower bound, we use a strengthened uniqueness condition.

\begin{definition}
A finite set $S$ of positive integers is \emph{strongly Sidon} if all sums containing zero, one, or two elements of $S$ are distinct, with repetition allowed in a two-element sum.

Equivalently, $S$ is strongly Sidon if $\{0\}\cup S$ is Sidon. Let
\[
b(n)=\max\{|S|:S\subseteq\mathcal{M}_n,\ S\text{ is strongly Sidon}\}.
\]
Clearly $s(n)\geq b(n)$.
\end{definition}

\begin{lemma}[Separated-dilation lemma]\label{lem:separated}
Let $S$ be strongly Sidon, let $L=\max S$, and let
\[
1\leq y_1<\cdots<y_h
\]
satisfy
\[
y_{i+1}>2Ly_i,\qquad 1\leq i<h.
\]
Then the sets $y_iS$ are pairwise disjoint, and
\[
U=\bigcup_{i=1}^h y_iS
\]
is strongly Sidon. In particular, $|U|=h|S|$.
\end{lemma}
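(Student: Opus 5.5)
We verify the strong Sidon property of $U$ directly, i.e.\ that all sums of zero, one or two elements of $U$ are distinct, by comparing the largest scale present in each sum. Since $S$ consists of positive integers, every $x\in S$ satisfies $1\le x\le L$. The elements of $y_iS$ therefore lie in $[y_i,Ly_i]$, and the elements of $y_{i+1}S$ lie in $[y_{i+1},Ly_{i+1}]$ with $y_{i+1}>2Ly_i>Ly_i$. Hence the intervals $[y_i,Ly_i]$ are pairwise disjoint and increasing in $i$. This gives disjointness of the $y_iS$, and since multiplication by $y_i$ is injective, $|U|=h|S|$.

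Any sum of at most two elements of $U$ can be written as $\sigma=y_ja+y_kb$ with $j\ge k$ and $a,b\in\{0\}\cup S$. If $a=0$, we force $b=0$ too, treating the empty sum and singletons this way. The key quantity is the top index $j$, meaning the largest $i$ for which a summand comes from $y_iS$. The main step is to show that $j$ is determined by $\sigma$. If the top index is $j$, then
\[
y_j\le\sigma\le 2Ly_j<y_{j+1},
\]
so $\sigma\in[y_j,y_{j+1})$. These half-open intervals are disjoint, so two equal sums have the same top index $j$, and the empty sum $0$ is the only sum with no top index.

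Now suppose $\sigma=\sigma'$ with common top index $j$. We split according to how many summands lie in the top scale $y_jS$.

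\emph{Both sums have all their nonzero summands in scale $j$.} Then $\sigma=y_j(a+b)$ and $\sigma'=y_j(a'+b')$ with $a,b,a',b'\in\{0\}\cup S$. Dividing by $y_j$ and using that $\{0\}\cup S$ is Sidon, we get $\{a,b\}=\{a',b'\}$.

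\emph{One sum has two summands in scale $j$, the other has exactly one.} Then $\sigma=y_j(a+b)$ with $a,b\in S$, and $\sigma'=y_ja'+z$ with $0<z\le Ly_{j-1}<y_j/2$. We have to exclude this case, and this is the main obstacle, because divisibility by $y_j$ cannot be used: the $y_i$ are arbitrary integers, and $z$ need not be a multiple of $y_j$. The plan is to rely on size instead. The gap $y_j(a+b-a')$ is either $0$ or at least $y_j$ in absolute value, since $a+b-a'$ is an integer. But it must equal $z\in(0,y_j/2)$, which is impossible.

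\emph{Both sums have exactly one summand in scale $j$.} Then $y_ja+z=y_ja'+z'$ with $a,a'\in S$ and $0\le z,z'<y_j/2$. Hence $y_j|a-a'|=|z'-z|<y_j/2$, which forces $a=a'$ and $z=z'$. The remaining sums $z=z'$ are sums of at most one element of $U$, and distinct such sums are distinct: the values $y_kb$ with $k<j$ and $b\in S$ are distinct by the disjointness shown above, and they are positive, so they differ from $0$.

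In every case the two representations coincide, so $U$ is strongly Sidon. The one delicate point is the mixed case, which must be handled by magnitude rather than by divisibility. The bound $Ly_{j-1}<y_j/2$ used there is precisely the hypothesis $y_j>2Ly_{j-1}$.
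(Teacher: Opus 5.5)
Your proof is correct and follows essentially the same route as the paper: you isolate the largest scale $y_j$, use $y_j>2Ly_{j-1}$ to show that lower-scale contributions cannot offset a change at the top scale, and invoke that $\{0\}\cup S$ is Sidon to identify the top-scale summands. The paper folds your three cases into one step by comparing top-scale coefficients $d\in\{0\}\cup S\cup(S+S)$ and then descending, whereas you prove disjointness first by size, which makes the scale of each element of $U$ well defined from the outset.
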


\begin{proof}
Suppose that two sums, each containing zero, one, or two elements of $U$, are equal. Let $j$ be the largest index of a scale occurring in either sum. The contribution at scale $y_j$ is $y_jd$, where
\[
d\in D:=\{0\}\cup S\cup(S+S).
\]
Strong Sidonicity says that every element of $D$ has a unique representation using zero, one, or two elements of $S$.

If $j=1$, there is no contribution from a smaller scale. If $j>1$, the contribution from all smaller scales on either side is between $0$ and $2Ly_{j-1}$, so the difference between the two smaller-scale contributions has absolute value less than $y_j$. If the two coefficients $d$ at scale $y_j$ were different, their contributions would differ by at least $y_j$, which is impossible. Thus the coefficients are equal. Strong Sidonicity identifies the same multiset of zero, one, or two elements of $S$ at scale $y_j$. Cancelling these terms and descending through the remaining scales proves uniqueness. In particular, one-element representations are unique, so the sets $y_iS$ are pairwise disjoint.
\end{proof}

\begin{proposition}[Strong-basis recursion]\label{prop:strong-recursion}
Let $1\leq r<n$, and let $S\subseteq\mathcal{M}_r$ be strongly Sidon with $L=\max S$. If $n\geq r+2L$, then
\[
b(n)\geq |S|(n-r-2L+1).
\]
\end{proposition}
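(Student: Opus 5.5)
We will build a strongly Sidon subset of $\mathcal{M}_n$ consisting of $h=n-r-2L+1$ separated dilates of $S$, and then apply Lemma~\ref{lem:separated}.

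\emph{Placement.} Write $n=r+m$ with $m=n-r\geq 2L$. By Proposition~\ref{prop:product-embedding},
\[
\binom{n}{r}\,x\,S\subseteq\mathcal{M}_n
\qquad\text{for every } x\in\mathcal{M}_m .
\]
It therefore suffices to find $h$ elements $1\le x_1<\cdots<x_h$ of $\mathcal{M}_m$ with $x_{i+1}>2Lx_i$. We then set $y_i=\binom{n}{r}x_i$. The separation condition is invariant under multiplying all $y_i$ by the common factor $\binom nr$, so Lemma~\ref{lem:separated} makes
\[
U=\bigcup_{i=1}^{h} y_iS
\]
strongly Sidon with $|U|=h|S|$. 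This gives $b(n)\geq |S|(n-r-2L+1)$.

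\emph{Scales.} Take falling factorials in $\mathcal{M}_m$, but with steps forced to exceed $2L$. A natural choice is
\[
x_i=\frac{m!}{(m-i+1)!},\qquad i=1,\ldots,h .
\]
Here $x_i$ corresponds to the partition $(m-i+1,1,\ldots,1)$, and $x_1=1$. The ratio is
\[
\frac{x_{i+1}}{x_i}=m-i+1 .
\]
We need $m-i+1>2L$ for all $1\le i<h$. The worst case is $i=h-1$, which requires $m-h+2>2L$, that is, $h<m-2L+2$. So $h=m-2L+1=n-r-2L+1$ works exactly. For $h=1$ the condition is vacuous, and $h\ge1$ because $n\geq r+2L$.

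\emph{Check.} The only delicate point is the off-by-one bookkeeping: the last ratio used must still exceed $2L$, and $h=m-2L+1$ is precisely the largest admissible value. All partitions involved are genuine partitions of $m$, since $m-i+1\ge 2L\ge 2$ for $i\le h$. Nothing else is needed: disjointness of the dilates and the strong Sidon property both come directly from Lemma~\ref{lem:separated}.
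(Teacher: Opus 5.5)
Your proposal is correct and is essentially the paper's proof. It uses the same falling-factorial scales $m!/(m-j)!$ in $\mathcal{M}_m$ (your $x_i$ is the paper's $y_{i-1}$), the same count of $m-2L+1$ scales, and the same combination of Lemma~\ref{lem:separated} with the product embedding. The only cosmetic difference is that you multiply by $\binom{n}{r}$ before invoking the lemma, whereas the paper applies the lemma first and then dilates; the two are equivalent because the separation ratios are unchanged by a common factor.
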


\begin{proof}
Put $m=n-r$ and define
\[
y_j=\frac{m!}{(m-j)!},\qquad 0\leq j\leq m-2L.
\]
These values belong to $\mathcal{M}_m$. For $0\leq j<m-2L$,
\[
\frac{y_{j+1}}{y_j}=m-j\geq2L+1>2L.
\]
Lemma~\ref{lem:separated} shows that $\bigcup_j y_jS$ is strongly Sidon and has cardinality $|S|(m-2L+1)$. Proposition~\ref{prop:product-embedding} places its common dilate
\[
\binom{n}{r}\left(\bigcup_j y_jS\right)
\]
inside $\mathcal{M}_n$.
\end{proof}

\begin{corollary}\label{cor:strong-recursion-crude}
For integers $1\leq r<n$ with $n\geq r+2r!$,
\[
b(n)\geq b(r)(n-r-2r!+1).
\]
\end{corollary}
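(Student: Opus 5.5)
This is a direct specialization of Proposition~\ref{prop:strong-recursion}: we take $S$ to be a largest strongly Sidon subset of $\mathcal{M}_r$, so that $|S|=b(r)$. To make the result independent of the particular $S$, we replace the hypothesis $n\geq r+2L$, where $L=\max S$, by the uniform bound $L\leq\max\mathcal{M}_r$.

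The first step is to show that every element of $\mathcal{M}_r$ is at most $r!$. An element of $\mathcal{M}_r$ has the form $r!/(p_1!\cdots p_t!)$, and each denominator $p_i!$ is at least $1$. Hence the element is at most $r!$, with equality for the partition $(1,\ldots,1)$. In particular $L\leq r!$.

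Next, fix a strongly Sidon set $S\subseteq\mathcal{M}_r$ with $|S|=b(r)$. Such a set exists because $\mathcal{M}_r$ is finite and nonempty, and $b(r)\geq1$ since a singleton $\{x\}$ with $x>0$ is strongly Sidon: the sums $0$, $x$ and $2x$ are distinct. From the hypothesis $n\geq r+2r!$ we get $n\geq r+2L$, and $r<n$ is also assumed. So Proposition~\ref{prop:strong-recursion} applies and gives
\[
b(n)\geq b(r)(n-r-2L+1).
\]

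Finally, $L\leq r!$ gives $n-r-2L+1\geq n-r-2r!+1\geq1>0$. Multiplying by $b(r)\geq0$ preserves the inequality, which yields the claimed bound.

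There is no real obstacle. The only points needing care are the monotonicity direction, namely that a smaller $L$ gives a larger factor, and the existence of a maximizing $S$. Both are immediate.
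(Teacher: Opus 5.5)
Your proposal is correct and follows the paper's own argument: take a largest strongly Sidon $S\subseteq\mathcal{M}_r$, use the crude bound $\max S\leq r!$ to meet the hypothesis $n\geq r+2L$ of Proposition~\ref{prop:strong-recursion}, and use that $n-r-2L+1\geq n-r-2r!+1$. Your checks that $b(r)\geq1$ and that the factor is positive are harmless extra detail.
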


\begin{proof}
Apply Proposition~\ref{prop:strong-recursion} to a largest strongly Sidon subset of $\mathcal{M}_r$, using the crude bound $\max S\leq r!$.
\end{proof}

\section{The large-subset theorem}\label{sec:lower}
We first record an entirely explicit consequence of the separated-dilation construction, and then combine Proposition~\ref{prop:strong-recursion} with general results on Sidon subsets and prime partitions.

\subsection{An explicit elementary bound}
\begin{lemma}\label{lem:Tr}
For every integer $r\geq3$, the set
\[
T_r=\left\{\frac{r!}{(r-k)!}:0\leq k\leq r-2\right\}
=\left\{1,r,r(r-1),\ldots,\frac{r!}{2}\right\}
\]
is strongly Sidon. In particular, $|T_r|=r-1$ and $\max T_r=r!/2$.
\end{lemma}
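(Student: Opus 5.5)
Write $f_k=r!/(r-k)!$ for $0\le k\le r-2$. Each $f_k$ lies in $\mathcal{M}_r$, coming from the partition $(r-k,1,\ldots,1)$, as in Corollary~\ref{cor:factorial-family}. Consecutive ratios are $f_k/f_{k-1}=r-k+1\ge3$ for $1\le k\le r-2$, since $k\le r-2$. The values are distinct, so $|T_r|=r-1$, and $\max T_r=f_{r-2}=r!/2$. It remains to prove strong Sidonicity, which by definition means that $\{0\}\cup T_r$ is Sidon. So I will show that every sum of zero, one, or two elements of $T_r$ (with repetition allowed) determines its multiset of summands.

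I will run the top-term argument of Corollary~\ref{cor:factorial-family}, adapted to allow the summand $0$. Put $f_{-1}=0$, so that every admissible sum is $f_i+f_j$ with $-1\le i\le j\le r-2$. Suppose $f_i+f_j=f_u+f_v$ with $i\le j$, $u\le v$, and without loss of generality $j\le v$.

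\begin{itemize}
\item If $v=-1$, both sums are $0$ and there is nothing to prove.
\item If $j<v$ and $v\ge1$, use the ratio bound $f_v\ge3f_{v-1}$. Then
\[
f_i+f_j\le 2f_{v-1}<3f_{v-1}\le f_v\le f_u+f_v,
\]
which is a contradiction.
\item If $j<v$ and $v=0$, then $j=-1$, so the left side is $0$ while the right side is at least $f_0=1$. This is again a contradiction.
\end{itemize}

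Hence $j=v$. Cancelling gives $f_i=f_u$, and since the values $f_{-1}<f_0<\cdots$ are strictly increasing, $i=u$.

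The only real care point is the strict inequality $2f_{v-1}<f_v$. In Corollary~\ref{cor:factorial-family} only $2f_{v-1}\le f_v$ was needed, because the right side there always had a positive second summand $f_u$. Here $f_u$ may be $0$, so strictness is essential. This is exactly why the index range stops at $k=r-2$, where every ratio is at least $3$, rather than at $k=r-1$, where the last ratio would be $2$ (for instance $r!/2+r!/2=r!$ would be a nontrivial collision). Strictness also holds at the bottom step: $2f_{-1}=0<1=f_0$. The same argument covers the one-element-versus-two-element collisions that the definition of strong Sidonicity forbids.
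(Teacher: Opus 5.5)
Your proof is correct and is essentially the paper's argument: both locate the largest index appearing in the two sums and use the ratio bound $f_k/f_{k-1}=r-k+1\geq 3$ to force the top terms to agree. The paper uses the superincreasing inequality $t_k>2\sum_{j<k}t_j$ at the largest index of differing multiplicity. You instead adjoin $f_{-1}=0$ and need only the strict bound $2f_{v-1}<f_v$, which is slightly more economical and correctly explains why the range stops at $k=r-2$.
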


\begin{proof}
Write
\[
t_k=\frac{r!}{(r-k)!},\qquad 0\leq k\leq r-2.
\]
Then
\[
\frac{t_k}{t_{k-1}}=r-k+1\geq3
\]
for $1\leq k\leq r-2$. Hence
\[
\sum_{j=0}^{k-1}t_j\leq t_k\sum_{h=1}^k3^{-h}<\frac{t_k}{2},
\]
so
\[
t_k>2\sum_{j<k}t_j.
\]
Suppose that two sums containing zero, one, or two elements of $T_r$ are equal. At the largest index $k$ where their multiplicities differ, the contribution has absolute value at least $t_k$, whereas the total possible contribution of all smaller terms is at most $2\sum_{j<k}t_j<t_k$, a contradiction.
\end{proof}

\begin{proposition}\label{prop:elementary-strong}
For every $r\geq3$ and every $n\geq r+r!$,
\[
s(n)\geq b(n)\geq(r-1)(n-r-r!+1).
\]
\end{proposition}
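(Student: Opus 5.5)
The plan is to feed the explicit strongly Sidon set $T_r$ of Lemma~\ref{lem:Tr} into the strong-basis recursion, Proposition~\ref{prop:strong-recursion}, and then use $s(n)\geq b(n)$ from the definition of $b$.

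First, $T_r\subseteq\mathcal{M}_r$. Each $t_k=r!/(r-k)!$ is the multinomial coefficient of the partition $(r-k,1,\ldots,1)$ of $r$, exactly as in Corollary~\ref{cor:factorial-family}. Lemma~\ref{lem:Tr} gives that $T_r$ is strongly Sidon, with $|T_r|=r-1$ and $L=\max T_r=r!/2$. Hence $2L=r!$.

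Next, apply Proposition~\ref{prop:strong-recursion} with $S=T_r$. The hypothesis $n\geq r+2L$ is precisely $n\geq r+r!$, and the conclusion is
\[
b(n)\geq |T_r|(n-r-2L+1)=(r-1)(n-r-r!+1).
\]
The side condition $1\leq r<n$ holds automatically, since $n\geq r+r!>r$.

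Finally, $s(n)\geq b(n)$ holds because every strongly Sidon set is Sidon. No step is a real obstacle. The only points needing care are the identity $2\max T_r=r!$, which is what makes the threshold $n\geq r+r!$ match the hypothesis of Proposition~\ref{prop:strong-recursion}, and the membership $T_r\subseteq\mathcal{M}_r$.
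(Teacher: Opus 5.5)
Your proof is correct and is exactly the paper's argument: plug $T_r$ (with $|T_r|=r-1$ and $2\max T_r=r!$) into Proposition~\ref{prop:strong-recursion}, then use $s(n)\geq b(n)$. Your explicit check that $T_r\subseteq\mathcal{M}_r$, via the partitions $(r-k,1,\ldots,1)$, is a detail the paper leaves implicit.
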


\begin{proof}
Apply the separated-dilation construction with $T_r$. Since $\max T_r=r!/2$, Proposition~\ref{prop:strong-recursion} gives
\[
b(n)\geq |T_r|(n-r-2\max T_r+1)=(r-1)(n-r-r!+1).
\]
\end{proof}

\begin{corollary}\label{cor:elementary-asymptotic}
As $n\to\infty$,
\[
s(n)\geq(1-o(1))\frac{n\log n}{\log\log n}.
\]
\end{corollary}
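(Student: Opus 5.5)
The plan is to apply Proposition~\ref{prop:elementary-strong} with $r=r(n)$ chosen as large as possible subject to $r!$ being negligible compared with $n$. The bound $s(n)\geq(r-1)(n-r-r!+1)$ is nearly $(r-1)n$ once $r+r!=o(n)$. So the whole argument reduces to showing that $r$ can be taken with $r\sim\log n/\log\log n$.

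Concretely, define $r=r(n)$ to be the largest integer with $r!\leq n/\log n$. This choice guarantees $r\to\infty$ and $r+r!\leq n$ for large $n$, so Proposition~\ref{prop:elementary-strong} applies. Then
\[
s(n)\geq(r-1)\Bigl(n-r-\frac{n}{\log n}+1\Bigr)=(r-1)\,n\,(1-o(1)),
\]
since $r=O(\log n)$ trivially and $n/\log n=o(n)$.

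It remains to estimate $r$ from below. By maximality, $(r+1)!>n/\log n$. Stirling's formula gives
\[
\log((r+1)!)=(r+1)\log(r+1)-(r+1)+O(\log r)\leq (r+1)\log(r+1).
\]
Hence $(r+1)\log(r+1)\geq\log n-\log\log n=(1-o(1))\log n$.

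Inverting $x\log x=y$ gives $x\sim y/\log y$. In our case $\log(r+1)\leq\log\log n$, so
\[
r+1\geq\frac{(1-o(1))\log n}{\log\log n}.
\]
Thus $r-1\geq(1-o(1))\log n/\log\log n$. Combining this with the previous display proves the corollary.

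The only point needing care is this final inversion, specifically checking that the error terms are all $o(1)$ relative to the main term. Using the crude bound $\log(r+1)\leq\log\log n$ in the denominator makes the inversion one-sided and avoids the need for precise asymptotics of the inverse function.
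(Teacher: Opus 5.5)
Your proposal is correct and follows the paper's proof: you choose the same $r$, the largest integer with $r!\le n/\log n$, and feed it into Proposition~\ref{prop:elementary-strong}. The only difference is that you prove just the needed one-sided bound $r\ge(1-o(1))\log n/\log\log n$, where the paper cites the two-sided Stirling asymptotic $r\sim\log n/\log\log n$. The step $\log(r+1)\le\log\log n$ needs a one-line justification, for instance that $r!\ge(r/e)^r$ together with $r!\le n$ forces $r+1\le\log n$ for large $n$, but this is routine.
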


\begin{proof}
Let $r$ be the largest integer satisfying $r!\leq n/\log n$. Stirling's formula and maximality give
\[
r\log r\sim\log n,\qquad r\sim\frac{\log n}{\log\log n}.
\]
Moreover, $r/n\to0$ and $r!/n\leq1/\log n\to0$. Proposition~\ref{prop:elementary-strong} now gives
\[
s(n)\geq(r-1)(n-r-r!+1)=(1-o(1))\frac{n\log n}{\log\log n}.
\]
\end{proof}

\subsection{A stronger bound from prime partitions}
\begin{lemma}[Strong-Sidon extraction]\label{lem:strong-extraction}
Every finite Sidon set $A$ of positive integers contains a strongly Sidon subset of cardinality at least $|A|/3$.
\end{lemma}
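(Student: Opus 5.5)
The plan is to reduce the statement to a sum-free extraction and then apply Erdős's classical averaging argument; the Sidon hypothesis on $A$ is used only to make the extracted subset Sidon automatically.

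\emph{Reduction.} Let $S\subseteq A$ be any subset. Since $S$ is a subset of the Sidon set $A$, it is Sidon, so nontrivial equalities among sums $x+y$ with $x,y\in S$ are already excluded. Being strongly Sidon means that $\{0\}\cup S$ is Sidon, so we only need to examine pair sums of $\{0\}\cup S$ that involve $0$:
\begin{itemize}
\item $0+0=0$ cannot equal any other pair sum, because the elements of $S$ are positive;
\item an equality $0+x=0+x'$ forces $x=x'$, which is trivial;
\item an equality $0+x=y+z$ with $x,y,z\in S$ is the only new possibility, and it means $x=y+z$, where $y=z$ is allowed.
\end{itemize}
Hence $S$ is strongly Sidon if and only if $S$ is \emph{sum-free} in the strong sense that $x=y+z$ has no solutions in $S$, including solutions with $y=z$. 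It therefore suffices to show that every finite set $A$ of positive integers contains such a sum-free subset with $|S|\geq|A|/3$.

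\emph{Modular averaging.} Choose a prime $p=3k+2$ larger than $2\max A$; such primes exist by Dirichlet's theorem, or by the elementary fact that there are infinitely many primes $\equiv2\pmod3$. Put
\[
I=\{k+1,\ldots,2k+1\}\subseteq\mathbb{Z}/p\mathbb{Z}.
\]
This middle third is sum-free modulo $p$, even with repeated summands allowed. Indeed, for $a,b\in I$ the representative of $a+b$ in $[0,p)$ is either in $[2k+2,3k+1]$ or, after subtracting $p$, in $[0,2k]-k-\ldots$; more simply, $a+b\in[2k+2,4k+2]$, and these integers reduce modulo $p$ to $[2k+2,3k+1]\cup[0,k]$, which is disjoint from $I$.

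For each $t\in\{1,\ldots,p-1\}$, let
\[
S_t=\{a\in A:\ ta\bmod p\in I\}.
\]
Each $S_t$ is sum-free in the strong sense: a relation $x=y+z$ in $S_t$ would give $tx\equiv ty+tz\pmod p$ with all three residues in $I$, which is impossible.

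Since $p\nmid a$ for every $a\in A$, the map $t\mapsto ta\bmod p$ is a bijection of the nonzero residues. So each $a\in A$ lies in $S_t$ for exactly $|I|=k+1$ values of $t$. Averaging over $t$ gives
\[
\frac{1}{p-1}\sum_{t=1}^{p-1}|S_t|=\frac{k+1}{3k+1}\,|A|>\frac{|A|}{3}.
\]
Some $t$ therefore satisfies $|S_t|\geq|A|/3$, and this $S=S_t$ is the required strongly Sidon subset.

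\emph{Main difficulty.} There is no real obstacle. The one point that needs care is that the sum-free property must include the case $y=z$, that is, $x=2y$. Working with the modular middle third handles this automatically, since $2a\notin I$ for every $a\in I$.
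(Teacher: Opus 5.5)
Your proof is correct and is essentially the paper's argument. Both reduce strong Sidonicity of a subset of a Sidon set to the absence of solutions of $x=y+z$ (including $y=z$), and then apply Erdős's middle-third averaging. The only difference is cosmetic: you use the discrete version with a prime $p\equiv2\pmod3$ and the block $\{k+1,\ldots,2k+1\}$, which even gives $|S_t|>|A|/3$ (and $p>\max A$ would already suffice), whereas the paper averages over $\theta\in[0,1)$ with the interval $(1/3,2/3)$ modulo $1$.
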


\begin{proof}
Let $I=(1/3, 2/3)\subset\mathbb{R}/\mathbb{Z}$. For $\theta\in[0, 1)$, put
\[
T_\theta=\{a\in A:\{\theta a\}\in I\},
\]
where $\{x\}$ denotes the fractional part of $x$. For each nonzero integer $a$, the map $\theta\mapsto\theta a\pmod 1$ preserves Lebesgue measure. Hence
\[
\int_0^1|T_\theta|\,d\theta=\frac{|A|}{3},
\]
so some $T_\theta$ has at least $|A|/3$ elements.

The interval $I$ is sum-free modulo $1$: if $\alpha,\beta\in I$, then $\alpha+\beta\pmod1\notin I$. Consequently no $x,y,z\in T_\theta$ satisfy $x+y=z$, including the case $x=y$. Since $T_\theta\subseteq A$ remains Sidon, and for positive Sidon sets the only additional obstruction to strong Sidonicity is a relation $x+y=z$, the set $T_\theta$ is strongly Sidon.
\end{proof}

\begin{corollary}\label{cor:b-vs-s}
For every $n\geq1$,
\[
\left\lceil\frac{s(n)}{3}\right\rceil\leq b(n)\leq s(n).
\]
Consequently, for $1\leq r<n$ and $n\geq r+2r!$,
\[
s(n)\geq\left\lceil\frac{s(r)}{3}\right\rceil(n-r-2r!+1).
\]
\end{corollary}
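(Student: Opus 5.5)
The lower bound $\lceil s(n)/3\rceil\leq b(n)$ comes from Lemma~\ref{lem:strong-extraction}. Take a Sidon subset $A\subseteq\mathcal{M}_n$ with $|A|=s(n)$; its elements are positive integers. The lemma gives a strongly Sidon subset $T\subseteq A\subseteq\mathcal{M}_n$ with $|T|\geq s(n)/3$. Since $|T|$ is an integer, $|T|\geq\lceil s(n)/3\rceil$, and therefore $b(n)\geq\lceil s(n)/3\rceil$.

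The upper bound $b(n)\leq s(n)$ was already noted after the definition of $b(n)$. A strongly Sidon set $S$ satisfies the Sidon condition, because the Sidon condition is exactly the requirement that the two-element sums (with repetition allowed) are distinct. This is part of the definition of strongly Sidon.

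For the recursive consequence, fix $1\leq r<n$ with $n\geq r+2r!$. Combine the bound just proved (at level $r$) with Corollary~\ref{cor:strong-recursion-crude} and with $s(n)\geq b(n)$:
\[
s(n)\geq b(n)\geq b(r)(n-r-2r!+1)\geq\left\lceil\frac{s(r)}{3}\right\rceil(n-r-2r!+1).
\]
The last step uses that the factor $n-r-2r!+1$ is at least $1$, hence positive, so we may replace $b(r)$ by the smaller quantity $\lceil s(r)/3\rceil$.

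No step presents a real obstacle; the argument only chains earlier results. The one point to check is that Lemma~\ref{lem:strong-extraction} applies, which needs the elements of $A$ to be positive integers. This holds because multinomial coefficients are positive.
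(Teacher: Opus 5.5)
Your proof is correct and follows the paper's own argument: you extract a strongly Sidon subset from a maximum Sidon subset via Lemma~\ref{lem:strong-extraction}, take $b(n)\leq s(n)$ from the definition, and chain Corollary~\ref{cor:strong-recursion-crude} with $s(n)\geq b(n)$ and the lower bound at level $r$. Your explicit remarks on integrality (which gives the ceiling) and on the positivity of the factor $n-r-2r!+1$ fill in details that the paper leaves implicit.
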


\begin{proof}
Apply Lemma~\ref{lem:strong-extraction} to a largest Sidon subset of $\mathcal{M}_n$ to obtain the first inequality; the second is immediate from the definition of $b(n)$. The recursive estimate follows from $s(n)\geq b(n)$ and Corollary~\ref{cor:strong-recursion-crude}.
\end{proof}

Let $p_{\mathbb{P}}(r)$ denote the number of unrestricted partitions of $r$ into prime parts.

\begin{proposition}\label{prop:b-asymptotic}
We have
\[
\liminf_{r\to\infty}\frac{\log b(r)}{\sqrt{r/\log r}}\geq\frac{\pi}{\sqrt3}.
\]
\end{proposition}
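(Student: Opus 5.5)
Via Corollary~\ref{cor:b-vs-s} it suffices to produce a large Sidon subset of $\mathcal{M}_r$ and lose a factor $3$, which is negligible on the logarithmic scale. The natural source is the multinomial coefficients indexed by \emph{prime partitions}. For a partition $\lambda$ of $r$ into prime parts, consider $m(\lambda)=r!/\prod_j \lambda_j!$. The plan is to show that distinct prime partitions give distinct values, and then to extract a Sidon subset of polynomially comparable size. Note that $\log p_{\mathbb{P}}(r)\sim \frac{2\pi}{\sqrt3}\sqrt{r/\log r}$ (Roth--Szekeres / Kerawala type asymptotics for partitions into primes). The target constant $\pi/\sqrt3$ is exactly half of this, which suggests the following route: an injection of prime partitions into $\mathcal{M}_r$, followed by a general Sidon extraction losing a square root.

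\textbf{Step 1 (injectivity).} I would first prove that $\lambda\mapsto\prod_j\lambda_j!$ is injective on prime partitions of $r$. Indeed, $\prod_j \lambda_j! = \prod_{p} (p!)^{a_p}$, where $a_p$ is the multiplicity of the prime $p$. Compare exponents of the largest prime $p$ occurring in either product. Among the factorials $q!$ with $q$ prime, the exponent of $p$ in $q!$ is $1$ if $p\le q<2p$ and larger otherwise. I would rather run a descending induction: the largest prime $P$ dividing the product is the largest part, and its multiplicity equals $v_P(\prod)$ provided no part is $\ge 2P$. That holds automatically since $P$ is the largest part. Dividing out $(P!)^{a_P}$ and recursing recovers $\lambda$. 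Hence $M_{\mathbb P}(r):=|\{m(\lambda)\}|=p_{\mathbb P}(r)$.

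\textbf{Step 2 (Sidon extraction).} Every finite set $A$ of positive integers contains a Sidon subset of size $\gg |A|^{1/2}$, or at least $|A|^{1/2-o(1)}$. The elementary version follows from the greedy algorithm: a chosen element excludes $O(k^3)$ others, so one gets $\gg|A|^{1/3}$, which is not enough. The known result of Komlós--Sulyok--Szemerédi / the probabilistic argument gives $\gg|A|^{1/2}$ in general. I would cite the classical fact that any set of $N$ integers contains a Sidon subset of size $\ge c\sqrt N$ (e.g.\ via Erd\H{o}s's random subset argument with alteration, or Ruzsa's theorem). Then $s(r)\ge c\sqrt{p_{\mathbb P}(r)}$. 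Lemma~\ref{lem:strong-extraction} converts this to $b(r)\ge \tfrac c3 \sqrt{p_{\mathbb P}(r)}$.

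\textbf{Step 3 (asymptotics).} Taking logarithms gives $\log b(r)\ge \tfrac12\log p_{\mathbb P}(r)-O(1)$. Inserting $\log p_{\mathbb P}(r)\sim 2\pi\sqrt{r/(3\log r)}$ yields the liminf $\ge \pi/\sqrt3$.

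The main obstacle is Step 2. A $|A|^{1/2-o(1)}$ bound for arbitrary sets is what is needed, and I would need to be sure the cited theorem gives exponent exactly $1/2$ and not $1/3$. If only $|A|^{1/3}$ were available, the constant would drop. Failing a clean citation, I would instead exploit the multiplicative structure: the map $\lambda\mapsto\log m(\lambda)$ is additive in the multiplicities $a_p$, which might allow a direct Sidon-type selection. I would also double-check that the prime-partition asymptotic constant is $2\pi/\sqrt3$.
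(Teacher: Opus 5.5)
Your proposal is correct and is essentially the paper's proof: the paper also takes a $\gg\sqrt{N}$ Sidon subset of $\mathcal{M}_r$, passes to a strongly Sidon subset via Lemma~\ref{lem:strong-extraction}, and uses $M(r)\geq p_{\mathbb{P}}(r)$ with $\log p_{\mathbb{P}}(r)\sim\frac{2\pi}{\sqrt3}\sqrt{r/\log r}$, except that it cites Theorem~1 of Andrews--Knopfmacher--Zimmermann for the injectivity you prove directly in Step~1 (your descending argument using $v_P(P!)=1$ is fine). Your worry in Step~2 is settled by citation: the $c_0\sqrt{N}$ bound for arbitrary $N$-element sets is Abbott's theorem (earlier Koml\'os--Sulyok--Szemer\'edi, sharpened by Bailleul--Riblet), which is what the paper invokes, whereas a random-subset-with-alteration argument by itself only gives $N^{1/3}$ and so cannot be used to justify it.
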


\begin{proof}
Let $H(m)$ be the minimum, over all $m$-element subsets $B\subseteq\mathbb{Z}$, of the maximum cardinality of a Sidon subset of $B$. Abbott proved that there is an absolute constant $c_0>0$ such that
\[
H(m)\geq c_0\sqrt m
\]
for all sufficiently large $m$~\cite{Abbott}. More recently, Bailleul and Riblet proved the sharper estimate
\[
H(m)\geq\left(\frac{1}{3\sqrt3}+o(1)\right)\sqrt m
\]
as $m\to\infty$~\cite{BailleulRiblet}. The argument below requires only the established order of magnitude $H(m)\gg\sqrt m$. Since $M(r)\geq r$ by Corollary~\ref{cor:factorial-family}, the set $\mathcal{M}_r$ therefore contains a Sidon subset $A$ with
\[
|A|\geq c_0\sqrt{M(r)}.
\]
Lemma~\ref{lem:strong-extraction} gives
\[
b(r)\geq\frac{c_0}{3}\sqrt{M(r)}.
\]
By Theorem~1 of Andrews, Knopfmacher, and Zimmermann~\cite{AKZ},
\[
M(r)\geq p_{\mathbb{P}}(r)
\]
for every positive integer $r$. The classical asymptotic for unrestricted prime partitions is
\[
\log p_{\mathbb{P}}(r)\sim\frac{2\pi}{\sqrt3}\sqrt{\frac{r}{\log r}}
\]
\cite{Kerawala,Vaughan}. Hence
\[
\log b(r)\geq\frac12\log p_{\mathbb{P}}(r)+O(1)
=\left(\frac{\pi}{\sqrt3}+o(1)\right)\sqrt{\frac{r}{\log r}},
\]
which proves the proposition.
\end{proof}

\begin{proof}[Proof of Theorem~\ref{thm:main-lower}]
Choose $r=r(n)$ maximal with $4r!\leq n$. For all sufficiently large $n$, the condition $n\geq r+2r!$ required in Corollary~\ref{cor:strong-recursion-crude} is then satisfied. Stirling's formula gives
\[
r\sim\frac{\log n}{\log\log n},\qquad \log r\sim\log\log n,
\]
and hence
\[
\sqrt{\frac{r}{\log r}}\sim\frac{\sqrt{\log n}}{\log\log n}.
\]
Moreover,
\[
n-r-2r!+1\geq\frac n2-r+1=\left(\frac12-o(1)\right)n.
\]
Corollary~\ref{cor:strong-recursion-crude} therefore yields
\[
\frac{s(n)}{n}\geq\left(\frac12-o(1)\right)b(r),
\]
and hence
\[
\log\frac{s(n)}{n}\geq\log b(r)+O(1).
\]
Combining this with Proposition~\ref{prop:b-asymptotic} and the preceding asymptotics gives
\[
\liminf_{n\to\infty}\frac{\log(s(n)/n)\log\log n}{\sqrt{\log n}}\geq\frac{\pi}{\sqrt3}.
\]
The equivalent $\varepsilon$-form follows directly. Since $\sqrt{\log n}/\log\log n$ grows faster than $\log\log n$, the lower bound also implies $s(n)=\Omega(n\log n)$; indeed, $s(n)/n$ grows faster than every fixed power of $\log n$.
\end{proof}

\section{Arithmetic progressions and separated copies}
\label{sec:progressions}

We next construct long arithmetic progressions in suitable multinomial coefficient sets and use the product embedding to place many widely separated copies of them in higher orders. The same construction will serve two different purposes: it forces omissions from every Sidon subset, and it produces disjoint contributions to pair-sum representation profiles.

For an integer $q\geq2$, define
\begin{equation}\label{eq:Nq-definition}
N(q)
=
\sum_{\substack{p\leq q\\ p\ \mathrm{prime}}}
p\left\lfloor\frac{\log q}{\log p}\right\rfloor.
\end{equation}

\begin{proposition}[Arithmetic-progression construction]
\label{prop:arithmetic-progression}
For every integer $q\geq2$, the set $\mathcal{M}_{N(q)}$ contains an arithmetic progression
\[
\lambda,2\lambda,\ldots,q\lambda
\]
for some positive integer $\lambda$.
\end{proposition}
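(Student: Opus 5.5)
Write $N=N(q)$. For each prime $p\leq q$ put $e_p=\lfloor\log q/\log p\rfloor$, the largest exponent with $p^{e_p}\leq q$. Then $N=\sum_{p\leq q}p\,e_p$, and the partition of $N$ into $e_p$ copies of each prime $p\leq q$ seems the natural starting point. The plan is to vary this partition slightly, once for each $j\in\{1,\ldots,q\}$, so that the resulting multinomial values are exactly $j\lambda$ for a common $\lambda$. Each $j\leq q$ factors as $j=\prod_{p\leq q}p^{a_p}$ with $0\leq a_p\leq e_p$. This matches the budget: $N$ is precisely the sum over primes $p\leq q$ of $p$ times the maximal exponent of $p$ occurring among $1,\ldots,q$.

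\textbf{Step 1 (one partition per $j$).} Given $j=\prod p^{a_p}$, for each prime $p$ take one part of size $p a_p$ together with $e_p-a_p$ parts of size $p$. If $a_p=0$ the first part is simply omitted. The parts for $p$ then sum to $pa_p+p(e_p-a_p)=pe_p$, so these blocks together form a partition of $N$. The multinomial value is $N!/\prod_p\bigl((pa_p)!\,(p!)^{e_p-a_p}\bigr)$. This is not yet of the form $j\lambda$, because $(pa_p)!$ is not $p^{a_p}(p!)^{a_p}$ times a constant. So the parts need adjusting.

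\textbf{Step 2 (correct the choice of parts).} The value should change by exactly a factor $p$ each time $a_p$ increases by one. Replacing a part $p$ by parts $1,1,\ldots$ does not do this: it changes $p!$ into $1$. The cleaner idea is that splitting a part $p$ into $p-1$ and $1$ multiplies the coefficient by $p!/(p-1)!=p$. So I would take $e_p$ blocks of size $p$ for each prime $p$. For a given $j$, I split $a_p$ of the $p$-blocks as $(p-1)+1$ and leave the other $e_p-a_p$ blocks as single parts $p$. The multinomial coefficient is then
\[
\frac{N!}{\prod_p (p!)^{e_p}}\prod_p p^{a_p}=\lambda j,\qquad \lambda=\frac{N!}{\prod_{p\leq q}(p!)^{e_p}}.
\]
This holds because each split block contributes $(p-1)!\,1!$ in place of $p!$. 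Here $\lambda$ is a positive integer, since it is the multinomial coefficient of the unsplit partition and so lies in $\mathcal{M}_N$. The case $j=1$ is the unsplit partition itself, with all $a_p=0$.

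\textbf{Step 3 (checks).} The identity $\sum_p pe_p=N$ makes every constructed multiset a partition of $N$. Each exponent satisfies $a_p\leq e_p$ because $p^{a_p}\leq j\leq q$. The case $p=2$ splits $2$ as $1+1$, which is fine.

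I expect no real obstacle; the only delicate point is the choice in Step 2 of the modification that multiplies the coefficient by exactly $p$. The product embedding (Proposition~\ref{prop:product-embedding}) gives an alternative formulation: apply it to the blocks, using $\{1,p\}\subseteq\mathcal{M}_p$, realised by the partitions $(p)$ and $(p-1,1)$. This gives
\[
\binom{N}{p,\ldots,p}\prod_{p}\{1,p\}^{e_p}\subseteq\mathcal{M}_N,
\]
and the product set contains every $j\leq q$.
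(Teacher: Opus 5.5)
Your Step 2 is correct and is exactly the paper's proof. You start from the partition with $e_p$ parts equal to $p$ for each prime $p\leq q$, set $\lambda=N(q)!/\prod_p(p!)^{e_p}$, and for $j=\prod_p p^{a_p}$ split $a_p$ of the parts $p$ into $(p-1)+1$; this divides the factorial product by exactly $j$. Step 1 is an abandoned detour and should be deleted from a written version.
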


\begin{proof}
For every prime $p\leq q$, put
\[
e_p
=
\left\lfloor\frac{\log q}{\log p}\right\rfloor
=
\max\{e\geq1:p^e\leq q\}.
\]
Consider the partition of $N(q)$ containing exactly $e_p$ parts equal to $p$ for every prime $p\leq q$. Define
\[
\Delta_q=\prod_{p\leq q}(p!)^{e_p},
\qquad
\lambda=\frac{N(q)!}{\Delta_q}.
\]

Let $1\leq j\leq q$, and write
\[
j=\prod_p p^{f_p}.
\]
Since $p^{f_p}\leq j\leq q$, we have $f_p\leq e_p$. For every prime divisor $p$ of $j$, replace $f_p$ of the parts equal to $p$ by the two parts $p-1$ and $1$. The sum of the parts remains unchanged, while the product of their factorials changes from $\Delta_q$ to
\[
\frac{\Delta_q}{\prod_p p^{f_p}}
=
\frac{\Delta_q}{j}.
\]
The resulting multinomial coefficient is therefore $j\lambda$. Hence
\[
\lambda,2\lambda,\ldots,q\lambda
\in\mathcal{M}_{N(q)}.
\]
\end{proof}

The size of the order $N(q)$ is asymptotically determined by the first powers of the primes.

\begin{lemma}
\label{lem:Nq-asymptotic}
As $q\to\infty$,
\[
N(q)\sim\frac{q^2}{2\log q}.
\]
\end{lemma}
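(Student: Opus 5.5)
We split $N(q)$ into the contribution of the first powers of primes and a remainder:
\[
N(q)=\sum_{p\leq q}p+\sum_{p\leq q}p\left(\left\lfloor\frac{\log q}{\log p}\right\rfloor-1\right).
\]
The plan is to show that the first sum is $\sim q^2/(2\log q)$ and that the second is $o(q^2/\log q)$.

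For the main term, we use the classical estimate
\[
\sum_{p\leq x}p\sim\frac{x^2}{2\log x}.
\]
This follows from the prime number theorem $\pi(x)\sim x/\log x$ by partial summation: write $\sum_{p\leq x}p=x\pi(x)-\int_2^x\pi(t)\,dt$. The integral is $\sim x^2/(2\log x)$, since $\int_2^x t/\log t\,dt\sim x^2/(2\log x)$ by L'H\^opital. Hence the whole expression is $x^2/\log x-x^2/(2\log x)+o(x^2/\log x)$.

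For the remainder, note that $\lfloor\log q/\log p\rfloor-1\geq1$ only when $p^2\leq q$, that is, $p\leq\sqrt q$. For such $p$ the factor is at most $\log q/\log 2$. Therefore the remainder is at most
\[
\frac{\log q}{\log 2}\sum_{p\leq\sqrt q}p\leq\frac{\log q}{\log 2}\cdot q,
\]
using the crude bound $\sum_{p\leq\sqrt q}p\leq\sqrt q\cdot\sqrt q$. This is $O(q\log q)=o(q^2/\log q)$. Combining the two parts gives $N(q)\sim q^2/(2\log q)$.

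The only nontrivial input is the prime number theorem. The one point needing care is the partial-summation step, which also holds with Chebyshev-type error terms. Everything else is elementary bounding.
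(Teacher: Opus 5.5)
Your proof is correct and follows the paper's argument: the same split into the first-power sum $\sum_{p\leq q}p$, handled by the prime number theorem and partial summation, and a higher-power remainder bounded by $(\log_2 q)\sum_{p\leq\sqrt q}p$. The only difference is that your trivial bound $\sum_{p\leq\sqrt q}p\leq q$ yields $O(q\log q)$ instead of the paper's $O(q)$, and this is equally sufficient.
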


\begin{proof}
Since a prime $p$ contributes once for each integer $j\geq1$ satisfying $p^j\leq q$, we may write
\[
N(q)
=
\sum_{j\geq1}\ \sum_{p\leq q^{1/j}}p.
\]
By the prime number theorem and partial summation,
\[
\sum_{p\leq x}p
\sim
\frac{x^2}{2\log x}.
\]
Thus the term $j=1$ contributes
\[
\frac{q^2}{2\log q}(1+o(1)).
\]
For $j\geq2$, only primes $p\leq\sqrt q$ occur, and there are at most $\log_2q$ nonempty outer sums. Consequently,
\[
\sum_{j\geq2}\ \sum_{p\leq q^{1/j}}p
\leq
(\log_2q)\sum_{p\leq\sqrt q}p
=O(q),
\]
which is negligible compared with $q^2/\log q$.
\end{proof}

We now place many separated copies of the progression from Proposition~\ref{prop:arithmetic-progression} inside a single higher-order set.

\begin{proposition}[Separated progression copies]
\label{prop:separated-progressions}
Let $q\geq2$ and
\[
n\geq N(q)+q.
\]
Put
\[
r=N(q),\qquad m=n-r,
\qquad
h=m-q+1=n-N(q)-q+1.
\]
Then $\mathcal{M}_n$ contains $h$ arithmetic progressions
\[
P_j=a_j\{1, 2,\ldots,q\},
\qquad
0\leq j\leq h-1,
\]
such that
\[
a_{j+1}>qa_j
\qquad
(0\leq j<h-1).
\]
Consequently, the intervals
\[
[a_j,qa_j],
\qquad
0\leq j\leq h-1,
\]
are pairwise disjoint, and the pair-sum intervals
\[
[2a_j,2qa_j],
\qquad
0\leq j\leq h-1,
\]
are also pairwise disjoint.
\end{proposition}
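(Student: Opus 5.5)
The plan mirrors the proof of Proposition~\ref{prop:strong-recursion}, with the arithmetic progression of Proposition~\ref{prop:arithmetic-progression} in place of a strongly Sidon set.

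\textbf{Construction.} By Proposition~\ref{prop:arithmetic-progression}, $\mathcal{M}_r$ (with $r=N(q)$) contains $\lambda\{1,\ldots,q\}$ for some $\lambda\geq1$. For the scales, take the falling factorials of $m=n-r$:
\[
y_j=\frac{m!}{(m-j)!},\qquad 0\leq j\leq h-1=m-q .
\]
Each $y_j$ lies in $\mathcal{M}_m$, being the multinomial coefficient of the partition $(m-j,1,\ldots,1)$ of $m$. Proposition~\ref{prop:product-embedding} with $n_1=r$, $n_2=m$ then gives
\[
\binom{n}{r}\lambda y_j\{1,\ldots,q\}\subseteq\mathcal{M}_n ,
\]
so we set $a_j=\binom nr\lambda y_j$ and $P_j=a_j\{1,\ldots,q\}$.

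\textbf{Separation.} For $0\leq j<h-1$ we have
\[
\frac{a_{j+1}}{a_j}=\frac{y_{j+1}}{y_j}=m-j\geq m-(h-2)=q+1>q .
\]
This index bookkeeping is the only step needing care. It uses $j\leq h-2=m-q-1$, and the hypothesis $n\geq N(q)+q$ ensures $h\geq1$, so the index range is nonempty.

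\textbf{Disjointness.} From $a_{j+1}>qa_j$ we get $qa_j<a_{j+1}$, so $[a_j,qa_j]$ ends strictly before $[a_{j+1},qa_{j+1}]$ begins. Since the $a_j$ increase, induction along $j$ makes all these intervals pairwise disjoint. Multiplying by $2$ gives $2qa_j<2a_{j+1}$, so the pair-sum intervals $[2a_j,2qa_j]$ are pairwise disjoint as well.

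I expect no real obstacle beyond checking that the ratio $m-j$ stays above $q$ over the whole index range.
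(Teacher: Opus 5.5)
Your proposal is correct and matches the paper's proof essentially step for step. You use the same falling-factorial scales $y_j=m!/(m-j)!\in\mathcal{M}_m$ for $0\leq j\leq m-q$, the same product-embedding placement $a_j=\binom nr\lambda y_j$, and the same ratio bound $y_{j+1}/y_j=m-j\geq q+1>q$, from which both disjointness claims follow.
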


\begin{proof}
Let
\[
\lambda\{1,\ldots,q\}
\subseteq
\mathcal{M}_r
\]
be the progression supplied by Proposition~\ref{prop:arithmetic-progression}. For
\[
0\leq j\leq m-q,
\]
put
\[
y_j=\frac{m!}{(m-j)!}.
\]
The value $y_j$ belongs to $\mathcal{M}_m$, since it is represented by the partition
\[
(m-j,1^j)
\]
of $m$. Moreover, for $0\leq j<m-q$,
\[
\frac{y_{j+1}}{y_j}
=
m-j
\geq q+1
>
q.
\]
By Proposition~\ref{prop:product-embedding},
\[
P_j
=
\binom nr\lambda y_j\{1,\ldots,q\}
\subseteq
\mathcal{M}_n.
\]
Setting
\[
a_j=\binom nr\lambda y_j
\]
gives
\[
\frac{a_{j+1}}{a_j}
=
\frac{y_{j+1}}{y_j}
>
q.
\]
Hence $qa_j<a_{j+1}$, so the intervals $[a_j,qa_j]$ are pairwise disjoint. Multiplying this inequality by $2$ shows that
\[
2qa_j<2a_{j+1},
\]
and therefore the intervals $[2a_j,2qa_j]$ are pairwise disjoint as well.
\end{proof}

The first disjointness statement allows restrictions on a Sidon subset to be applied independently inside each progression $P_j$. The second ensures that the internal pair-sum representation profiles of different progression copies occur on disjoint sum values and may therefore be added without overlap.
\section{The Sidon defect}\label{sec:defect}
Let $F(q)$ denote the maximum size of a Sidon subset of $\{1,\ldots,q\}$. It is classical that
\[
F(q)\sim\sqrt q;
\]
see~\cite{OBryant} and the references therein.

\begin{proposition}\label{prop:defect-finite}
For every integer $q\geq2$ and every $n\geq N(q)+q$,
\[
M(n)-s(n)\geq(n-N(q)-q+1)(q-F(q)).
\]
\end{proposition}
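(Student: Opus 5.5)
The plan is to use the separated progression copies of Proposition~\ref{prop:separated-progressions} and count, for a fixed maximum Sidon subset, the elements of $\mathcal{M}_n$ it must omit. Fix $q\geq2$ and $n\geq N(q)+q$, and put $h=n-N(q)-q+1$. Proposition~\ref{prop:separated-progressions} supplies progressions $P_j=a_j\{1,\ldots,q\}\subseteq\mathcal{M}_n$ for $0\leq j\leq h-1$. Since $a_{j+1}>qa_j$, the intervals $[a_j,qa_j]$ are pairwise disjoint. Hence the sets $P_j$ are pairwise disjoint subsets of $\mathcal{M}_n$, each with exactly $q$ elements.

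Now let $S\subseteq\mathcal{M}_n$ be a Sidon set with $|S|=s(n)$. For each $j$, the set $S\cap P_j$ is Sidon, because subsets of Sidon sets are Sidon. The dilation $x\mapsto x/a_j$ maps it bijectively onto a subset of $\{1,\ldots,q\}$. Dilation by a nonzero constant preserves the Sidon property, so $(S\cap P_j)/a_j$ is a Sidon subset of $\{1,\ldots,q\}$. It follows that $|S\cap P_j|\leq F(q)$, and hence $|P_j\setminus S|\geq q-F(q)$.

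The sets $P_j\setminus S$ are pairwise disjoint and contained in $\mathcal{M}_n\setminus S$. Summing over $j$ gives
\[
M(n)-s(n)=|\mathcal{M}_n\setminus S|\geq\sum_{j=0}^{h-1}|P_j\setminus S|\geq h\,(q-F(q)),
\]
which is the claimed inequality.

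No step here is a real obstacle. The only point needing care is that omissions in different copies are counted separately, and this is exactly the disjointness of the $P_j$ guaranteed by $a_{j+1}>qa_j$. The Sidon condition is global, but we only use its restriction to each $P_j$, and that restriction is automatic.
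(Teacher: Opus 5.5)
Your proposal is correct and follows the paper's proof: you take the $h$ pairwise disjoint progression copies $P_j$ from Proposition~\ref{prop:separated-progressions} and use dilation invariance to get $|S\cap P_j|\leq F(q)$. Summing the forced omissions $q-F(q)$ over the $h$ copies then gives the bound. You merely spell out the disjointness and counting steps slightly more explicitly than the paper does.
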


\begin{proof}
By Proposition~\ref{prop:separated-progressions}, the set $\mathcal{M}_n$ contains
\[
h=n-N(q)-q+1
\]
pairwise disjoint dilates $P_j=a_j\{1,\ldots,q\}$. A Sidon subset meets each $P_j$ in at most $F(q)$ elements, because dilation preserves pair-sum equalities. Thus at least $q-F(q)$ elements must be omitted from each of the $h$ progressions, which proves the claim.
\end{proof}

\begin{proof}[Proof of Theorem~\ref{thm:defect}]
Choose an integer $q=q(n)$ satisfying
\[
q\sim c\sqrt{n\log n},\qquad 0<c<1.
\]
Then $\log q\sim\tfrac12\log n$, and Lemma~\ref{lem:Nq-asymptotic} gives
\[
N(q)\sim\frac{c^2n\log n}{2\log q}\sim c^2n.
\]
In particular, $N(q)+q<n$ for all sufficiently large $n$. Moreover, $F(q)=o(q)$. Proposition~\ref{prop:defect-finite} gives
\[
M(n)-s(n)\geq\bigl((1-c^2)+o(1)\bigr)n\bigl(c+o(1)\bigr)\sqrt{n\log n}.
\]
Hence
\[
\liminf_{n\to\infty}\frac{M(n)-s(n)}{n^{3/2}\sqrt{\log n}}\geq c(1-c^2).
\]
The maximum over $0<c<1$ is attained at $c=1/\sqrt3$, with value $2/(3\sqrt3)$.
\end{proof}

\section{Pair-sum representation functions and collision statistics}\label{sec:collision}
Let $A$ be a finite set of positive integers. For an integer $t$, define
\[
r_A(t)=\#\{(x,y):x,y\in A,\ x\leq y,\ x+y=t\}.
\]
Set
\begin{align*}
C(A)&=\sum_t\max(r_A(t)-1, 0),\\
D(A)&=\#\{t:r_A(t)\geq2\},\\
\mu(A)&=\max_t r_A(t),\\
T(A,k)&=\#\{t:r_A(t)\geq k\}.
\end{align*}
Thus $C(n)=C(\mathcal{M}_n)$, and similarly for $D$, $\mu$, and $T$.
\begin{proposition}\label{prop:basic-relations}
For every finite set $A$,
\[
C(A)=\binom{|A|+1}{2}-|A+A|.
\]
Moreover,
\[
T(A,2)=D(A),
\qquad
C(A)=\sum_{k=2}^{\mu(A)}T(A,k),
\]
and
\[
\mu(A)=\max\{k:T(A,k)>0\}.
\]
The sequence $k\mapsto T(A,k)$ is weakly decreasing.
\end{proposition}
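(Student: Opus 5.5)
The plan is to count the pair $(t,r_A(t))$ in two ways and read everything off the multiset of unordered pairs. First, the number of unordered pairs $\{x,y\}$ with $x\leq y$, $x,y\in A$ (equal summands allowed) is $\binom{|A|}{2}+|A|=\binom{|A|+1}{2}$. Each such pair contributes to exactly one value $t=x+y$, so
\[
\sum_t r_A(t)=\binom{|A|+1}{2}.
\]
The set $A+A$ is exactly $\{t:r_A(t)\geq1\}$. Since $r_A(t)\geq0$ is an integer, $\max(r_A(t)-1,0)=r_A(t)-\mathbf 1[r_A(t)\geq1]$. Summing over $t$ gives $C(A)=\binom{|A|+1}{2}-|A+A|$.

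For the remaining identities, $T(A,2)=D(A)$ holds directly by definition. For the layer-cake formula, the key observation is that for each integer $r\geq0$,
\[
\max(r-1,0)=\#\{k\in\mathbb{Z}:2\leq k\leq r\}=\sum_{k\geq2}\mathbf 1[r\geq k].
\]
Applying this with $r=r_A(t)$, summing over $t$, and interchanging the two finite sums yields $C(A)=\sum_{k\geq2}T(A,k)$. Since $r_A(t)\leq\mu(A)$ for all $t$, the terms with $k>\mu(A)$ vanish, which truncates the sum at $\mu(A)$. When $\mu(A)\leq1$, the sum is empty and both sides are $0$; this convention should be noted.

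Finally, monotonicity holds because $\{t:r_A(t)\geq k+1\}\subseteq\{t:r_A(t)\geq k\}$. For the description of $\mu(A)$: $T(A,k)>0$ iff some $t$ has $r_A(t)\geq k$, iff $k\leq\mu(A)$. Hence the maximum such $k$ is $\mu(A)$, provided $A$ is nonempty so that $\mu(A)\geq1$; here $k$ ranges over positive integers in this formula, even though $T(\cdot,k)$ was introduced for $k\geq2$. There is no real obstacle. The only care needed is bookkeeping of the diagonal pairs $x=y$ in the count $\binom{|A|+1}{2}$, and the degenerate cases $\mu(A)\le 1$ or $A=\emptyset$.
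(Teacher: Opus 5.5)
Your proposal is correct and follows essentially the same route as the paper: count the $\binom{|A|+1}{2}$ unordered pairs, observe that a sum value with $r$ representations contributes $r-1$ to $C(A)$ and once to each of $T(A,2),\ldots,T(A,r)$, and read off the remaining assertions from the definitions. Your extra care with the degenerate cases is a valid refinement that the paper leaves implicit: when $\mu(A)\le 1$ the formula for $\mu(A)$ needs $k=1$ allowed, and $A=\emptyset$ must be excluded.
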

\begin{proof}
There are $\binom{|A|+1}{2}$ unordered pairs with repetition. A sum value with $r$ representations contributes $r-1$ to the difference between the number of pairs and the number of distinct sums, proving the first identity. It also contributes once to each of $T(A,2),\ldots,T(A,r)$, proving the second identity. The remaining assertions follow directly from the definitions.
\end{proof}
\begin{proposition}\label{prop:monotone-set}
If $A\subseteq B$, then
\[
C(A)\leq C(B),\qquad D(A)\leq D(B),\qquad
\mu(A)\leq\mu(B),\qquad T(A,k)\leq T(B,k).
\]
\end{proposition}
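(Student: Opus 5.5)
The plan is to compare the representation functions pointwise and then read off each inequality from the definitions. The main step is the observation that for every integer $t$,
\[
r_A(t)\leq r_B(t).
\]
Indeed, $r_A(t)$ counts pairs $(x,y)$ with $x,y\in A$, $x\leq y$, $x+y=t$. Since $A\subseteq B$, each such pair is also a pair with $x,y\in B$, $x\leq y$, $x+y=t$. So the set of representations counted by $r_A(t)$ injects (by inclusion) into the set counted by $r_B(t)$.

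Each statistic is then a monotone functional of the representation function, and I will go through them in turn.

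For $T$, fix $k$. Every $t$ with $r_A(t)\geq k$ also satisfies $r_B(t)\geq k$. Hence $\{t:r_A(t)\geq k\}\subseteq\{t:r_B(t)\geq k\}$, and taking cardinalities gives $T(A,k)\leq T(B,k)$.

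For $D$, the case $k=2$ of the previous step gives $D(A)\leq D(B)$, using $T(\cdot,2)=D(\cdot)$ from Proposition~\ref{prop:basic-relations}.

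For $\mu$, the pointwise bound gives $\mu(A)=\max_t r_A(t)\leq\max_t r_B(t)=\mu(B)$. If $A$ is empty, both sides should be read via the convention for an empty maximum.

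For $C$, the map $u\mapsto\max(u-1,0)$ is nondecreasing. Applying it to the pointwise bound and summing over $t$ gives $C(A)\leq C(B)$. Both sums are finite, since only finitely many $t$ have $r\geq 2$.

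There is no real obstacle here. The only point requiring care is to use the pointwise comparison of $r_A$ and $r_B$ for every statistic, rather than the identity $C(A)=\binom{|A|+1}{2}-|A+A|$: adding elements increases both terms of that identity, so it does not give monotonicity directly.
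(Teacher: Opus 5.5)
Your proposal is correct and follows essentially the same approach as the paper: the paper also observes that every representation inside $A$ remains a representation inside $B$, so $r_A(t)\leq r_B(t)$ for every $t$, and reads all four inequalities off this pointwise bound. Your statistic-by-statistic verification simply writes out the monotonicity steps that the paper leaves implicit.
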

\begin{proof}
Every representation by elements of $A$ remains a representation by elements of $B$. Thus $r_B(t)\geq r_A(t)$ for every $t$, which implies all four inequalities.
\end{proof}
By Corollary~\ref{cor:successive-embedding},
\[
(n+1)\mathcal{M}_n\subseteq\mathcal{M}_{n+1}.
\]
All four statistics are invariant under dilation by a positive integer. Proposition~\ref{prop:monotone-set} therefore shows that $C(n)$, $D(n)$, $\mu(n)$, and $T(n,k)$ are weakly increasing in $n$ for every fixed $k$.

\section{The exact pair-sum profile of an arithmetic progression}\label{sec:profile}
Let $I_q=\{1, 2,\ldots,q\}$. Scaling does not change representation multiplicities.
\begin{lemma}\label{lem:AP-profile}
For $2\leq t\leq2q$, let
\[
\rho_q(t)=\#\{(i,j):1\leq i\leq j\leq q,\ i+j=t\}.
\]
Then
\[
\rho_q(t)=
\begin{cases}
\lfloor t/2\rfloor,&2\leq t\leq q+1,\\
\lfloor t/2\rfloor-t+q+1,&q+1<t\leq2q.
\end{cases}
\]
For $2\leq k\leq\lceil q/2\rceil$, the values of $t$ satisfying $\rho_q(t)\geq k$ are precisely
\[
2k,2k+1,\ldots,2q-2k+2.
\]
Thus
\[
T(I_q,k)=2q-4k+3.
\]
\end{lemma}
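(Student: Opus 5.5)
The plan is a direct count of the pairs $(i,j)$ followed by reading off the threshold sets. Fix $t$ with $2\le t\le 2q$. An unordered pair $i\le j$ with $i+j=t$ is determined by its smaller entry $i$, with $j=t-i$. The constraints on $i$ are $i\ge1$, $i\le j$ (that is, $i\le\lfloor t/2\rfloor$), and $j\le q$ (that is, $i\ge t-q$). Hence
\[
\rho_q(t)=\lfloor t/2\rfloor-\max(1,t-q)+1 .
\]
For $t\le q+1$ the maximum is $1$, which gives $\lfloor t/2\rfloor$. For $t>q+1$ it is $t-q$, which gives $\lfloor t/2\rfloor-t+q+1$. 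This proves the displayed formula.

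For the threshold set we use the reflection symmetry $\rho_q(t)=\rho_q(2q+2-t)$, coming from the involution $(i,j)\mapsto(q+1-j,q+1-i)$. We check it directly: substituting $t'=2q+2-t$ into the second branch gives $\lfloor t'/2\rfloor-t'+q+1=(q+1-\lceil t/2\rceil)-(2q+2-t)+q+1=t-\lceil t/2\rceil=\lfloor t/2\rfloor$. On the increasing range $2\le t\le q+1$ we have $\rho_q(t)=\lfloor t/2\rfloor\ge k$ if and only if $t\ge 2k$. For this range to contain $2k$ we need $2k\le q+1$, which is exactly $k\le\lceil q/2\rceil$. By the symmetry, on $q+1\le t\le 2q$ we have $\rho_q(t)\ge k$ if and only if $t\le 2q+2-2k$. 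These two ranges meet at $t=q+1$ because $2k\le q+1\le 2q+2-2k$.

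So the set $\{t:\rho_q(t)\ge k\}$ is exactly the integer interval $[2k,\,2q-2k+2]$. Its length is $(2q-2k+2)-2k+1=2q-4k+3$. Since $\mathcal{M}$-independent scaling is irrelevant, $T(I_q,k)=2q-4k+3$.

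The only delicate point is handling the floor and ceiling parities in the symmetry check and confirming that the hypothesis $k\le\lceil q/2\rceil$ is exactly what makes the two one-sided ranges overlap. This should be verified separately for $q$ even and $q$ odd. Everything else is routine.
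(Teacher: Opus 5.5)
Your proof is correct and follows the paper's argument: you parametrize by the smaller entry $i$ with $\max(1,t-q)\le i\le\lfloor t/2\rfloor$ and then read off the threshold on each half. The only cosmetic difference is that you justify the decreasing half via the reflection $\rho_q(t)=\rho_q(2q+2-t)$ rather than solving $q+1-\lceil t/2\rceil\ge k$ directly, and your identity $\lfloor(2q+2-t)/2\rfloor=q+1-\lceil t/2\rceil$ already covers both parities of $q$, so no separate even/odd check is needed.
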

\begin{proof}
For a fixed $t$, the first coordinate satisfies
\[
\max(1,t-q)\leq i\leq\left\lfloor\frac t2\right\rfloor.
\]
Counting these integers gives the formula for $\rho_q$. The inequality $\rho_q(t)\geq k$ is equivalent to $t\geq2k$ on the increasing half and to $t\leq2q-2k+2$ on the decreasing half. The resulting interval contains $2q-4k+3$ integers.
\end{proof}
\begin{corollary}\label{cor:AP-statistics}
For $q\geq3$,
\[
C(I_q)=\frac{(q-1)(q-2)}2,
\qquad D(I_q)=2q-5,
\qquad \mu(I_q)=\left\lceil\frac q2\right\rceil.
\]
\end{corollary}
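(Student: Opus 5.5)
The three formulas follow from Lemma~\ref{lem:AP-profile} together with the identities of Proposition~\ref{prop:basic-relations}; I would treat each statistic separately.

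For $D(I_q)$, Proposition~\ref{prop:basic-relations} gives $D(I_q)=T(I_q,2)$. Since $q\geq3$, we have $\lceil q/2\rceil\geq2$, so Lemma~\ref{lem:AP-profile} applies with $k=2$ and yields $2q-8+3=2q-5$.

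For $C(I_q)$, I would not sum the profile but use the first identity of Proposition~\ref{prop:basic-relations}, $C(A)=\binom{|A|+1}{2}-|A+A|$. Here $|I_q|=q$, and $I_q+I_q=\{2,\ldots,2q\}$ has exactly $2q-1$ elements, since every integer in that range is $i+j$ with $1\leq i\leq j\leq q$. Hence
\[
C(I_q)=\frac{q(q+1)}2-(2q-1)=\frac{q^2-3q+2}2=\frac{(q-1)(q-2)}2.
\]
As a cross-check, $\sum_{k=2}^{\lceil q/2\rceil}(2q-4k+3)$ should give the same value. I would verify this for both parities of $q$, but it is not needed for the proof.

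For $\mu(I_q)$, I would take the maximum of $\rho_q$ using the explicit formula in Lemma~\ref{lem:AP-profile}. On $2\leq t\leq q+1$, $\rho_q(t)=\lfloor t/2\rfloor$ is nondecreasing, with maximum $\lfloor (q+1)/2\rfloor=\lceil q/2\rceil$ at $t=q+1$. On $q+1<t\leq 2q$, the value $\lfloor t/2\rfloor-t+q+1$ is nonincreasing in $t$, so it is at most its value at $t=q+2$, namely $\lfloor (q+2)/2\rfloor-1=\lfloor q/2\rfloor\leq\lceil q/2\rceil$. Therefore $\mu(I_q)=\lceil q/2\rceil$. Equivalently, by Lemma~\ref{lem:AP-profile} with $k=\lceil q/2\rceil$, the count $T(I_q,\lceil q/2\rceil)=2q-4\lceil q/2\rceil+3$ is at least $1$, so $\mu(I_q)\geq\lceil q/2\rceil$, and the maximum computation gives the matching upper bound.

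The only real obstacle is the parity bookkeeping in the maximum of $\rho_q$ and the boundary case $q=3$. For $q=3$, the range $2\leq k\leq\lceil q/2\rceil$ consists of $k=2$ alone, and the formulas give $D=1$, $C=1$, $\mu=2$. I would check this directly from $\{1,2,3\}$: the only repeated sum is $4=1+3=2+2$, so all three values are correct.
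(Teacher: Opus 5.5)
Your proposal is correct and follows essentially the same route as the paper. Both compute $C(I_q)=\binom{q+1}{2}-(2q-1)$, obtain $D(I_q)=T(I_q,2)=2q-5$ from Lemma~\ref{lem:AP-profile}, and take $\mu(I_q)$ as the maximum of $\rho_q$. You add details the paper omits: the check that $k=2\leq\lceil q/2\rceil$ when $q\geq3$, and the monotonicity of the two branches of $\rho_q$.
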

\begin{proof}
There are $\binom{q+1}{2}$ unordered pairs and $2q-1$ distinct sums, so
\[
C(I_q)=\binom{q+1}{2}-(2q-1)=\frac{(q-1)(q-2)}2.
\]
The identity $D(I_q)=T(I_q,2)=2q-5$ follows from Lemma~\ref{lem:AP-profile}. The maximum of $\rho_q$ equals $\lceil q/2\rceil$.
\end{proof}
We also use the ordered additive energy
\[
E^+(A)=\#\{(x_1,x_2,x_3,x_4)\in A^4:x_1+x_2=x_3+x_4\}
=\sum_t R_A(t)^2,
\]
where $R_A(t)$ counts ordered pairs $(x,y)\in A^2$ with $x+y=t$; see~\cite{TaoVu}.
\begin{lemma}\label{lem:AP-energy}
For every $q\geq1$,
\[
E^+(I_q)=\frac{2q^3+q}{3}.
\]
\end{lemma}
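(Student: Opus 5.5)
We compute the energy directly from the ordered representation function of $I_q$. For $2\leq t\leq 2q$, the number of ordered pairs $(x,y)\in I_q^2$ with $x+y=t$ is
\[
R(t)=\min(t-1,\,2q+1-t),
\]
since $x$ ranges over $\max(1,t-q)\leq x\leq\min(q,t-1)$. Thus $R$ increases as $1,2,\ldots,q$ for $t=2,\ldots,q+1$ and then decreases as $q-1,\ldots,1$ for $t=q+2,\ldots,2q$. (Alternatively one could derive $R$ from $\rho_q$ in Lemma~\ref{lem:AP-profile} via $R(t)=2\rho_q(t)-[t\text{ even}]$, but the direct count is simpler.)

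By the identity $E^+(A)=\sum_t R_A(t)^2$ recalled above, we get
\[
E^+(I_q)=\sum_{k=1}^{q}k^2+\sum_{k=1}^{q-1}k^2
=2\sum_{k=1}^{q-1}k^2+q^2
=\frac{(q-1)q(2q-1)}{3}+q^2 .
\]
Expanding, $(q-1)q(2q-1)=2q^3-3q^2+q$. Hence
\[
E^+(I_q)=\frac{2q^3-3q^2+q+3q^2}{3}=\frac{2q^3+q}{3}.
\]
The case $q=1$ checks directly: $E^+(I_1)=1$.

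There is no genuine obstacle here. The only point requiring care is the boundary of the piecewise formula for $R(t)$, namely that the peak value $q$ at $t=q+1$ is counted exactly once, so that the two sums above run to $q$ and $q-1$ respectively.
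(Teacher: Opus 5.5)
Your proposal is correct and follows the paper's proof: both read off the ordered representation counts $1,2,\ldots,q,\ldots,2,1$ and sum their squares to get $q^2+2\sum_{j=1}^{q-1}j^2=(2q^3+q)/3$. Your explicit derivation of $R(t)=\min(t-1,2q+1-t)$ simply justifies the count sequence that the paper states without proof.
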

\begin{proof}
The ordered representation counts are
\[
1, 2,\ldots,q-1,q,q-1,\ldots,2, 1.
\]
Therefore
\[
E^+(I_q)=q^2+2\sum_{j=1}^{q-1}j^2=\frac{2q^3+q}{3}.
\]
\end{proof}

\section{Many separated multiplicity profiles}\label{sec:manyprofiles}
\begin{proof}[Proof of Theorem~\ref{thm:finite-main}]
Let $P_0,\ldots,P_{h-1}$ be supplied by Proposition~\ref{prop:separated-progressions}. Their internal pair-sum intervals are pairwise disjoint, and each $P_j$ is a dilation of $I_q$.

For every internal sum of $P_j$, all representations inside $P_j$ remain representations in $\mathcal{M}_n$. Other elements of $\mathcal{M}_n$ can only increase its multiplicity. Since different copies concern distinct sum values, their contributions may be added. Lemma~\ref{lem:AP-profile} and Corollary~\ref{cor:AP-statistics} give
\[
C(n)\geq h\frac{(q-1)(q-2)}2,
\qquad
D(n)\geq h(2q-5),
\]
and
\[
T(n,k)\geq h(2q-4k+3)
\]
for $2\leq k\leq\lceil q/2\rceil$. One copy already contains a sum with $\lceil q/2\rceil$ representations, proving the assertion for $\mu(n)$.
\end{proof}
\begin{proposition}\label{prop:finite-energy}
For $q\geq2$ and $n\geq N(q)+q$,
\[
E^+(\mathcal{M}_n)
\geq\bigl(n-N(q)-q+1\bigr)\frac{2q^3+q}{3}.
\]
\end{proposition}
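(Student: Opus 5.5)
The plan is to reuse the separated progressions from Proposition~\ref{prop:separated-progressions}, exactly as in the proof of Theorem~\ref{thm:finite-main}. The additive energy then splits into disjoint sum ranges, one for each progression. Put $h=n-N(q)-q+1$. The proposition gives progressions $P_j=a_j I_q\subseteq\mathcal{M}_n$ for $0\le j\le h-1$, and their ordered pair-sum sets lie in the pairwise disjoint intervals $[2a_j,2qa_j]$.

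First, I would write
\[
E^+(\mathcal{M}_n)=\sum_t R_{\mathcal{M}_n}(t)^2 .
\]
For every $t$ and every $j$ we have $R_{\mathcal{M}_n}(t)\ge R_{P_j}(t)$, since ordered representations inside $P_j$ remain representations inside $\mathcal{M}_n$. All terms are nonnegative, and for each $t$ at most one $j$ has $R_{P_j}(t)\neq0$, because the sum intervals are disjoint. Hence
\[
\sum_t R_{\mathcal{M}_n}(t)^2\ \ge\ \sum_{j=0}^{h-1}\sum_{t\in[2a_j,2qa_j]}R_{P_j}(t)^2=\sum_{j=0}^{h-1}E^+(P_j).
\]

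Second, additive energy is invariant under dilation: the map $x\mapsto a_jx$ gives a bijection between additive quadruples in $I_q$ and those in $P_j$. So $E^+(P_j)=E^+(I_q)=(2q^3+q)/3$ by Lemma~\ref{lem:AP-energy}. Summing over the $h$ copies gives the stated bound.

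There is no real obstacle. The only point needing care is that the copies must contribute on disjoint sets of $t$, so that no squared term is shared between two copies. This disjointness is exactly the second assertion of Proposition~\ref{prop:separated-progressions}. That proposition needs $q\ge2$ and $n\ge N(q)+q$, which are precisely the hypotheses here.
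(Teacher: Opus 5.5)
Your proposal is correct and follows essentially the same route as the paper. Both arguments take the separated progression copies of Proposition~\ref{prop:separated-progressions}, use the disjointness of their pair-sum intervals and the pointwise bound $R_{\mathcal{M}_n}(t)\ge R_{P_j}(t)$, and then apply Lemma~\ref{lem:AP-energy} to each copy via dilation invariance.
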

\begin{proof}
For each progression $P_j$, sum the squared ordered representation counts over its internal sum interval. These intervals are pairwise disjoint, so no sum value receives contributions from two different progression copies. The full representation count in $\mathcal{M}_n$ is at least the internal count of $P_j$. Lemma~\ref{lem:AP-energy} gives the result.
\end{proof}

\section{Asymptotic multiplicity consequences}\label{sec:asymmult}
For fixed $0<c<1$, choose the integer
\[
q=\left\lfloor c\sqrt{n\log n}\right\rfloor.
\]
Then $q\sim c\sqrt{n\log n}$ and $q=o(n)$. Since $\log q\sim\tfrac12\log n$, Lemma~\ref{lem:Nq-asymptotic} gives
\[
N(q)\sim\frac{q^2}{2\log q}=\bigl(c^2+o(1)\bigr)n,
\qquad
n-N(q)-q+1=\bigl(1-c^2+o(1)\bigr)n.
\]
\begin{proof}[Proof of Theorem~\ref{thm:asymptotic-main}]
Theorem~\ref{thm:finite-main} gives
\[
C(n)\geq\left(\frac12c^2(1-c^2)+o(1)\right)n^2\log n.
\]
The coefficient is maximized at $c=1/\sqrt2$, where it equals $1/8$.

Similarly,
\[
D(n)\geq\left(2c(1-c^2)+o(1)\right)n^{3/2}\sqrt{\log n}.
\]
The coefficient is maximized at $c=1/\sqrt3$, where it equals $4/(3\sqrt3)$.

For the maximum multiplicity, any fixed $0<c<1$ gives
\[
\mu(n)\geq\left(\frac c2+o(1)\right)\sqrt{n\log n}.
\]
Taking the supremum over fixed $c<1$ proves the last assertion.
\end{proof}
\begin{proof}[Proof of Theorem~\ref{thm:profile-limit}]
Fix $c$ with $2\alpha<c<1$, and choose $q=\lfloor c\sqrt{n\log n}\rfloor$. The hypothesis on $k_n$ implies $k_n\leq\lceil q/2\rceil$ for all sufficiently large $n$. Theorem~\ref{thm:finite-main} yields
\[
T(n,k_n)\geq
\bigl((1-c^2)+o(1)\bigr)n
\bigl((2c-4\alpha)+o(1)\bigr)\sqrt{n\log n}.
\]
Hence
\[
\liminf_{n\to\infty}\frac{T(n,k_n)}{n^{3/2}\sqrt{\log n}}
\geq2(c-2\alpha)(1-c^2).
\]
The derivative of $(c-2\alpha)(1-c^2)$ vanishes when
\[
3c^2-4\alpha c-1=0.
\]
For $0\leq\alpha<1/2$, its unique root in $(2\alpha,1)$ is
\[
c=c_\alpha=\frac{2\alpha+\sqrt{4\alpha^2+3}}3.
\]
Substitution gives $\Phi(\alpha)$.
\end{proof}
\begin{corollary}\label{cor:energy-asymp}
We have
\[
\liminf_{n\to\infty}
\frac{E^+(\mathcal{M}_n)}{n^{5/2}(\log n)^{3/2}}
\geq\frac{4\sqrt{15}}{125}.
\]
\end{corollary}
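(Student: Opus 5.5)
Combine Proposition~\ref{prop:finite-energy} with the same choice of $q$ as in the other asymptotic results. Fix $0<c<1$ and put $q=\lfloor c\sqrt{n\log n}\rfloor$. As recorded at the start of this section, $q\sim c\sqrt{n\log n}$, $\log q\sim\tfrac12\log n$, and by Lemma~\ref{lem:Nq-asymptotic}
\[
n-N(q)-q+1=\bigl(1-c^2+o(1)\bigr)n .
\]
In particular $n\geq N(q)+q$ for all large $n$, so Proposition~\ref{prop:finite-energy} applies.

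Since $q\to\infty$, we have $(2q^3+q)/3=(\tfrac23+o(1))q^3$ and $q^3=(c^3+o(1))\,n^{3/2}(\log n)^{3/2}$. Therefore
\[
E^+(\mathcal{M}_n)\geq\Bigl(\tfrac23c^3(1-c^2)+o(1)\Bigr)n^{5/2}(\log n)^{3/2},
\]
and for each fixed $c$,
\[
\liminf_{n\to\infty}\frac{E^+(\mathcal{M}_n)}{n^{5/2}(\log n)^{3/2}}\geq\tfrac23c^3(1-c^2).
\]

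It remains to optimize over $c$. The derivative of $c^3-c^5$ is $3c^2-5c^4$, which vanishes on $(0,1)$ only at $c^2=3/5$. Since $c^3(1-c^2)$ is zero at both endpoints and positive inside, this is the maximum. At this point
\[
c^3(1-c^2)=\Bigl(\tfrac35\Bigr)^{3/2}\cdot\tfrac25=\frac{3\sqrt3}{5\sqrt5}\cdot\frac25=\frac{6\sqrt{15}}{125},
\]
so the bound becomes $\tfrac23\cdot\frac{6\sqrt{15}}{125}=\frac{4\sqrt{15}}{125}$, which is the claimed constant.

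The argument has no real obstacle. The only points needing care are that the $o(1)$ terms from $N(q)$, the floor in $q$, and the lower-order term $q/3$ are all absorbed multiplicatively, and that $c$ is fixed before letting $n\to\infty$. Because the optimal $c=\sqrt{3/5}$ lies strictly inside $(0,1)$, no limiting argument in $c$ is needed.
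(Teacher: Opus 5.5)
Your proposal is correct and follows the paper's proof: you apply Proposition~\ref{prop:finite-energy} with $q=\lfloor c\sqrt{n\log n}\rfloor$, obtain the coefficient $\tfrac23c^3(1-c^2)$, and maximize it at $c=\sqrt{3/5}$ to get $4\sqrt{15}/125$. Your explicit checks that the hypothesis $n\geq N(q)+q$ holds for large $n$ and that the lower-order terms are absorbed are details the paper leaves implicit.
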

\begin{proof}
Proposition~\ref{prop:finite-energy} gives
\[
E^+(\mathcal{M}_n)\geq
\left(\frac23c^3(1-c^2)+o(1)\right)n^{5/2}(\log n)^{3/2}.
\]
The coefficient is maximized at $c=\sqrt{3/5}$, where it equals $4\sqrt{15}/125$.
\end{proof}

\section{Dependence on the number of variables}\label{sec:variables}
For $1\leq v\leq n$, let
\[
\mathcal{M}_{n,v}=
\left\{
\frac{n!}{e_1!\cdots e_v!}:
 e_1+\cdots+e_v=n,\quad e_i\geq0
\right\}
\]
with repeated values removed. Equivalently, $\mathcal{M}_{n,v}$ consists of the values corresponding to partitions of $n$ into at most $v$ positive parts. Thus
\[
\mathcal{M}_{n,1}\subseteq\mathcal{M}_{n,2}\subseteq\cdots\subseteq\mathcal{M}_{n,n}=\mathcal{M}_n.
\]
Define $C(n,v)$, $D(n,v)$, $\mu(n,v)$, and $T(n,v,k)$ by replacing $\mathcal{M}_n$ with $\mathcal{M}_{n,v}$.
\begin{proposition}\label{prop:variables-monotone}
For fixed $n$, all four statistics are weakly increasing in $v$.
\end{proposition}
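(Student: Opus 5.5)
The plan is to reduce the statement to the set-monotonicity principle already established in Proposition~\ref{prop:monotone-set}. That proposition shows that all four statistics $C$, $D$, $\mu$ and $T(\cdot,k)$ are weakly increasing under inclusion of finite sets. So it suffices to verify the chain of inclusions
\[
\mathcal{M}_{n,v}\subseteq\mathcal{M}_{n,v+1},\qquad 1\leq v<n.
\]

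To prove the inclusion, take any element $n!/(e_1!\cdots e_v!)$ of $\mathcal{M}_{n,v}$, with $e_1+\cdots+e_v=n$ and $e_i\geq0$. Append one further coordinate $e_{v+1}=0$. Since $0!=1$, the value is unchanged, and the extended tuple is admissible for $v+1$ variables. Hence the value lies in $\mathcal{M}_{n,v+1}$. Equivalently, in the partition description, a partition of $n$ into at most $v$ positive parts is also a partition into at most $v+1$ positive parts.

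With the inclusion in hand, apply Proposition~\ref{prop:monotone-set} with $A=\mathcal{M}_{n,v}$ and $B=\mathcal{M}_{n,v'}$ for any $v\leq v'$. This gives
\[
C(n,v)\leq C(n,v'),\qquad D(n,v)\leq D(n,v'),\qquad \mu(n,v)\leq\mu(n,v'),\qquad T(n,v,k)\leq T(n,v',k)
\]
for every $k$.

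There is no genuine obstacle. The only point deserving a remark is that $\mathcal{M}_{n,v}$ is defined as a set with repeated values removed, so $r_A(t)$ counts pairs of distinct values. Proposition~\ref{prop:monotone-set} applies verbatim to sets, so this causes no difficulty. Note that $r_B(t)\geq r_A(t)$ holds pointwise in $t$, which is exactly what the proposition uses.
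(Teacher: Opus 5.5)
Your proof is correct and matches the paper's argument, which likewise deduces the monotonicity from the inclusion chain $\mathcal{M}_{n,1}\subseteq\cdots\subseteq\mathcal{M}_{n,n}$ together with Proposition~\ref{prop:monotone-set}. Your justification of the inclusion, by appending a zero coordinate and using $0!=1$, spells out a step the paper only asserts.
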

\begin{proof}
This follows from set inclusion and Proposition~\ref{prop:monotone-set}.
\end{proof}
\begin{lemma}\label{lem:max-by-parts}
Let $(p_1,\ldots,p_r)$ be a partition of $n$ into $r$ positive parts. Then
\[
\prod_{i=1}^r p_i!\geq2^{n-r},
\]
with equality if and only if every part is $1$ or $2$. Consequently, if $d\geq0$ and $n\geq2d$, then
\[
\max\mathcal{M}_{n,n-d}=\frac{n!}{2^d},
\]
achieved by the partition $(2^d,1^{n-2d})$.
\end{lemma}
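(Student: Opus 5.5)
The plan is to prove the factorial inequality one part at a time and then read off the maximum. For a single part $p\geq1$ I will show
\[
p!\geq2^{p-1},
\]
with equality exactly when $p\in\{1,2\}$. This follows by induction: $1!=2^0$ and $2!=2^1$, and for $p\geq2$ we have $(p+1)!=(p+1)\,p!\geq3\cdot2^{p-1}>2^{p}$, so the inequality is strict for every $p\geq3$. Taking the product over the $r$ parts gives
\[
\prod_{i=1}^r p_i!\geq\prod_{i=1}^r2^{p_i-1}=2^{n-r}.
\]
Since every factor is positive, equality in the product holds if and only if equality holds in each factor, that is, if and only if every part is $1$ or $2$.

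For the consequence, every element of $\mathcal{M}_{n,n-d}$ comes from a partition of $n$ into some number $r\leq n-d$ of positive parts. Its value therefore satisfies
\[
\frac{n!}{\prod p_i!}\leq\frac{n!}{2^{n-r}}\leq\frac{n!}{2^{d}},
\]
because $n-r\geq d$. For attainment, the hypothesis $n\geq2d$ ensures that the partition $(2^d,1^{n-2d})$ exists. It has exactly $r=n-d$ parts, all equal to $1$ or $2$, so its value is $n!/2^{d}$. Hence the maximum is $n!/2^d$.

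No step here is a real obstacle. The only points needing care are the equality characterization, which requires the strict inequality for $p\geq3$, and noting that $n\geq2d$ is exactly what makes the extremal partition available with $r\leq n-d$.
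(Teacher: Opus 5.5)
Your proposal is correct and follows the paper's proof essentially step for step. Both use the per-part bound $a!\geq 2^{a-1}$ with equality exactly for $a=1,2$, multiply over the parts, then use $n-r\geq d$ for partitions with at most $n-d$ parts and attainment by $(2^d,1^{n-2d})$; your induction and the remark that equality of a product of positive factors forces equality in each factor just make explicit what the paper states briefly.
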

\begin{proof}
For every integer $a\geq1$,
\[
a!\geq2^{a-1},
\]
with equality exactly for $a=1, 2$. Multiplication gives
\[
\prod_{i=1}^r p_i!\geq2^{\sum_i(p_i-1)}=2^{n-r}.
\]
If a partition has at most $n-d$ parts, then $n-r\geq d$, so its factorial denominator is at least $2^d$. When $n\geq2d$, the displayed partition has exactly $n-d$ parts and denominator $2^d$.
\end{proof}
In particular,
\[
\max\mathcal{M}_{n,n-1}=\frac{n!}{2},\qquad
\max\mathcal{M}_{n,n-2}=\frac{n!}{4},\qquad
\max\mathcal{M}_{n,n-3}=\frac{n!}{8}
\]
under the respective hypotheses $n\geq2$, $n\geq4$, and $n\geq6$.
\begin{proposition}\label{prop:variables-stable}
For every $n\geq3$ and every $k\geq2$,
\begin{align*}
C(n,n-2)&=C(n,n-1)=C(n,n),\\
D(n,n-2)&=D(n,n-1)=D(n,n),\\
\mu(n,n-2)&=\mu(n,n-1)=\mu(n,n),\\
T(n,n-2,k)&=T(n,n-1,k)=T(n,n,k).
\end{align*}
For $n\geq5$,
\[
C(n,n-3)<C(n,n-2),
\qquad
D(n,n-3)<D(n,n-2).
\]
\end{proposition}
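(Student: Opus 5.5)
The plan is to describe exactly which values are added when passing from $\mathcal{M}_{n,n-2}$ to $\mathcal{M}_{n,n}$, and to show that they create only sums with a single representation. A partition of $n$ with more than $n-2$ parts has $n-1$ or $n$ parts. It is therefore $(2,1^{n-2})$ or $(1^n)$, so
\[
\mathcal{M}_{n}=\mathcal{M}_{n,n-2}\cup\Bigl\{\tfrac{n!}{2},\,n!\Bigr\},
\qquad
\mathcal{M}_{n,n-1}=\mathcal{M}_{n,n-2}\cup\Bigl\{\tfrac{n!}{2}\Bigr\}.
\]
For $n=3$ I will check the claim directly. Here $\mathcal{M}_{3,1}=\{1\}$ and $\mathcal{M}_3=\{1,3,6\}$, and the pair sums $2,4,7,6,9,12$ of $\mathcal{M}_3$ are all distinct. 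Hence every statistic vanishes for $v=1,2,3$ (with $\mu=1$ throughout).

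For $n\geq4$, Lemma~\ref{lem:max-by-parts} gives $\max\mathcal{M}_{n,n-2}=n!/4$. The key step is to split the pair sums of $\mathcal{M}_n$ into classes according to which summands are old:
\begin{itemize}
\item old$+$old sums lie in $[2,n!/2]$;
\item $n!/2+x$ with $x$ old lies in $(n!/2,3n!/4]$;
\item $n!/2+n!/2=n!$;
\item $n!+x$ with $x$ old lies in $(n!,5n!/4]$;
\item the remaining sums are $3n!/2$ and $2n!$.
\end{itemize}
These ranges are pairwise disjoint. Within each class involving a new element, the sum determines the old summand. Hence $r_n(t)=r_{\mathcal{M}_{n,n-2}}(t)$ for every old$+$old sum, and every other sum has exactly one representation.

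It follows that the representation profiles agree on all $t$ with $r\geq2$, so $C$, $D$, $\mu$ and $T(\cdot,k)$ for $k\geq2$ coincide for $v=n-2$ and $v=n$. For $\mu$ the common value is at least $1$ in both cases. The case $v=n-1$ is sandwiched between them by Proposition~\ref{prop:variables-monotone}.

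For the strict inequalities with $n\geq5$, I will exhibit one new collision. The value $n!/4$ comes from $(2,2,1^{n-4})$, which has $n-2$ parts, and $n!/6$ comes from $(3,1^{n-3})$, also with $n-2$ parts. The value $n!/12$ comes from $(3,2,1^{n-5})$, which has $n-3$ parts. Then
\[
\frac{n!}{4}+\frac{n!}{12}=\frac{n!}{3}=\frac{n!}{6}+\frac{n!}{6}
\]
gives $r_{\mathcal{M}_{n,n-2}}(n!/3)\geq2$.

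On the other hand, every element of $\mathcal{M}_{n,n-3}$ is smaller than $n!/6$:
\begin{itemize}
\item for $n\geq6$, Lemma~\ref{lem:max-by-parts} gives the maximum $n!/8$;
\item for $n=5$, direct inspection of $\mathcal{M}_{5,2}=\{1,5,10\}$ gives the maximum $10$.
\end{itemize}
So pair sums of $\mathcal{M}_{n,n-3}$ are less than $n!/3$, and $r_{\mathcal{M}_{n,n-3}}(n!/3)=0$. Since $r_{\mathcal{M}_{n,n-3}}\leq r_{\mathcal{M}_{n,n-2}}$ pointwise, as in the proof of Proposition~\ref{prop:monotone-set}, the extra sum value $n!/3$ strictly increases both $D$ and $C$.

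The only delicate point is the bookkeeping in the disjoint-range classification, in particular the boundary cases: old$+$old can reach $n!/2$ only via $n!/4+n!/4$, while sums $n!/2+x$ with $x$ old are strictly larger. The small cases $n=3$ and $n=5$, where Lemma~\ref{lem:max-by-parts} does not apply, have to be handled separately as above.
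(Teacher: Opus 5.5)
Your proof is correct and follows the paper's argument. Beyond $\mathcal{M}_{n,n-2}$ the only added values are $n!/2$ and $n!$; these dominate every old pair sum, so no multiplicity $\geq 2$ is created or altered. Strictness then comes from one explicit collision in $\mathcal{M}_{n,n-2}$ lying above all pair sums of $\mathcal{M}_{n,n-3}$.

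The differences are cosmetic. You treat both adjunctions at once and recover $v=n-1$ by monotonicity, whereas the paper adds $n!/2$ and then $n!$ in two steps. Your identity $n!/4+n!/12=n!/6+n!/6$ works uniformly for $n\geq5$; at $n=5$ it is exactly the paper's $30+10=20+20$. The paper instead uses $n!/4+n!/24=n!/6+n!/8$ for $n\geq6$ and treats $n=5$ separately.
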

\begin{proof}
The case $n=3$ is immediate. Let $n\geq4$. Passing from $\mathcal{M}_{n,n-2}$ to $\mathcal{M}_{n,n-1}$ adds exactly $n!/2$, corresponding to $(2, 1^{n-2})$. By Lemma~\ref{lem:max-by-parts}, the largest old value is $n!/4$. Since both summands in an old pair are at most $n!/4$, every old pair sum is at most $n!/2$. Every sum involving the new value is at least $n!/2+1$, and the new sums are mutually distinct. No multiplicity at least $2$ is created or altered.

Passing from $\mathcal{M}_{n,n-1}$ to $\mathcal{M}_{n,n}$ adds exactly $n!$, corresponding to $(1^n)$. The largest previous value is $n!/2$, so, by the same two-summand bound, every old pair sum is at most $n!$. Every sum involving the new value is at least $n!+1$, and the new sums are mutually distinct. This proves all four stabilization identities.

For $n\geq6$, the values $n!/24$ and $n!/8$ already belong to $\mathcal{M}_{n,n-3}$, while $n!/6$ and $n!/4$ first occur in $\mathcal{M}_{n,n-2}$. Lemma~\ref{lem:max-by-parts} gives $\max\mathcal{M}_{n,n-3}=n!/8$, so every old pair sum is at most $n!/4$. The equality
\[
\frac{n!}{4}+\frac{n!}{24}
=
\frac{n!}{6}+\frac{n!}{8}
=\frac{7n!}{24}
\]
is therefore a genuinely new colliding sum, proving strict increase of both $C$ and $D$. For $n=5$, use $10+30=20+20$.
\end{proof}
\begin{remark}
The stabilization is specific to the last two adjunctions. It does not assert stability in earlier columns, and the two-parameter growth of $C(n,v)$ and $D(n,v)$ remains open.
\end{remark}

\section{An explicit family of trinomial collisions}\label{sec:trinomial}
Let $\mathcal{T}_n=\mathcal{M}_{n,3}$ and write $C_3(n)=C(\mathcal{T}_n)$ and $D_3(n)=D(\mathcal{T}_n)$.

For $m\geq3$ and $2\leq q\leq m$, define
\begin{align*}
A_q&=\binom{2m+1}{m,\,m-q+1,\,q},\\
B_q&=\binom{2m+1}{m+1,\,m-q+2,\,q-2},\\
U_q&=\binom{2m+1}{m,\,m-q+2,\,q-1},\\
V_q&=\binom{2m+1}{m+1,\,m-q,\,q}.
\end{align*}
\begin{lemma}\label{lem:trinomial-identity}
For every $m\geq3$ and $2\leq q\leq m$,
\[
A_q+B_q=U_q+V_q.
\]
\end{lemma}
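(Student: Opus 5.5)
The plan is to verify the identity by direct algebra: pull out a common factor from all four multinomial coefficients and reduce the claim to a polynomial identity in $m$ and $q$. First, the hypotheses $m\geq3$ and $2\leq q\leq m$ ensure that every lower entry ($m$, $m+1$, $m-q$, $m-q+1$, $m-q+2$, $q$, $q-1$, $q-2$) is a nonnegative integer. Each triple also sums to $2m+1$. So all four quantities are genuine trinomial coefficients and lie in $\mathcal{T}_{2m+1}$, although membership is not needed for the identity itself.

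Next, write each term as $(2m+1)!$ divided by a product of three factorials. Factor out the common quantity
\[
K=\frac{(2m+1)!}{(m+1)!\,(m-q+2)!\,q!}.
\]
Each denominator differs from $(m+1)!(m-q+2)!q!$ only by dropping a few top factors, so each term becomes $K$ times a product of at most two linear factors:
\begin{align*}
A_q&=K\,(m+1)(m-q+2), & B_q&=K\,q(q-1),\\
U_q&=K\,(m+1)q, & V_q&=K\,(m-q+2)(m-q+1).
\end{align*}
For example, in $A_q$ the denominator $m!(m-q+1)!q!$ is missing the factors $m+1$ and $m-q+2$ relative to the common one. For $V_q$, the denominator $(m+1)!(m-q)!q!$ is missing $(m-q+2)(m-q+1)$.

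It then remains to check
\[
(m+1)(m-q+2)+q(q-1)=(m+1)q+(m-q+2)(m-q+1).
\]
Set $a=m+1$ and $b=m-q+2$, so that $q=a-b+1$, $q-1=a-b$ and $m-q+1=b-1$. Both sides then expand to $a^2-ab+b^2+a-b$:
\begin{align*}
ab+(a-b+1)(a-b)&=a^2-ab+b^2+a-b,\\
a(a-b+1)+b(b-1)&=a^2-ab+b^2+a-b.
\end{align*}
This proves the identity. The only step needing care is the bookkeeping of which factorial factors are cancelled in each of the four terms, in particular the boundary case $q=2$ where $(q-2)!=0!$. The reduction handles this uniformly, so I expect no real obstacle.
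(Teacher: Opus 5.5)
Your proposal is correct and is essentially the paper's proof. The paper also divides by $K_q=(2m+1)!/\bigl((m+1)!\,q!\,(m-q+2)!\bigr)$, obtains the same four normalized products, and reduces to the same quadratic identity. The only difference is that you verify that identity explicitly via $a=m+1$, $b=m-q+2$, where the paper just says it follows by expansion.
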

\begin{proof}
Divide all four terms by
\[
K_q=\frac{(2m+1)!}{(m+1)!q!(m-q+2)!}.
\]
The normalized terms are
\[
A_q=(m+1)(m-q+2)K_q,\quad
B_q=q(q-1)K_q,
\]
\[
U_q=(m+1)qK_q,\quad
V_q=(m-q+2)(m-q+1)K_q.
\]
The required equality reduces to
\[
(m+1)(m-q+2)+q(q-1)
=(m+1)q+(m-q+2)(m-q+1),
\]
which follows by expansion.
\end{proof}
Let $S_q=A_q+B_q$. Direct simplification gives
\[
S_q=
\frac{\bigl((m+1)(m+2)-q(m+2-q)\bigr)(2m+1)!}
{q!(m+1)!(m+2-q)!},
\]
so $S_q=S_{m+2-q}$.
\begin{lemma}\label{lem:trinomial-distinct}
For distinct $q,r\in\{2,\ldots,m\}$,
\[
S_q=S_r\quad\Longleftrightarrow\quad q+r=m+2.
\]
Moreover, Lemma~\ref{lem:trinomial-identity} gives a genuine collision except when $m$ is even and $q=(m+2)/2$. The symmetric parameters $q$ and $m+2-q$ describe the same equality with its sides reversed.
\end{lemma}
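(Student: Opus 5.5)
The plan is to work with the closed form of $S_q$. Write $n=m+2$ and
\[
S_q=(2m+1)!\,\frac{g(q)}{(m+1)!\,q!\,(n-q)!},\qquad g(q)=(m+1)n-q(n-q).
\]
Note that $g(q)$ depends only on $q(n-q)$. The denominator factor $q!(n-q)!$ is symmetric under $q\mapsto n-q$, so the implication $q+r=m+2\Rightarrow S_q=S_r$ is immediate.

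For the converse, it suffices to show that $q\mapsto S_q$ is strictly monotone on each half of $\{2,\ldots,m\}$. Replacing $q$ by $n-q$, we may assume $q,r\le n/2$, and then show that $S_q$ is strictly decreasing for $2\le q\le n/2$. Consider the ratio
\[
\frac{S_{q+1}}{S_q}=\frac{g(q+1)}{g(q)}\cdot\frac{q+1}{n-q}.
\]
The factor $(q+1)/(n-q)$ is at most $1$ when $q+1\le n-q$. Moreover, $g$ is decreasing on $q\le n/2$, because $q(n-q)$ increases there. Hence each factor is at most $1$ and the product is strictly less than $1$. One boundary case needs a check: if $q+1=n-q$, the first factor equals $1$ but $g(q+1)<g(q)$ still holds strictly, since $(q+1)(n-q-1)>q(n-q)$ is equivalent to $n-2q-1>0$, which fails precisely when $n=2q+1$.

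So the main obstacle is this middle step when $n$ is odd. There $q(n-q)=(q+1)(n-q-1)$, so $g$ does not strictly decrease, while $(q+1)/(n-q)=1$. This makes $S_q=S_{q+1}$, and that is exactly the pair with $q+(q+1)=n$. This case is allowed by the statement, so strict monotonicity only needs to hold across distinct symmetric classes. I would organize the argument by the class representative $\min(q,n-q)$: $S$ is strictly decreasing in this representative for $2\le\min(q,n-q)\le\lfloor n/2\rfloor$. Since $\min(q,n-q)$ determines the pair $\{q,n-q\}$, this yields $S_q=S_r\Leftrightarrow q+r=n$ for distinct $q,r$. One should also confirm that the endpoint $q=2$ pairs with $q=m$, so all indices in $\{2,\ldots,m\}$ stay within range.

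For genuineness of the collision, I need to show that the unordered pairs $\{A_q,B_q\}$ and $\{U_q,V_q\}$ differ. Using the normalized forms from Lemma~\ref{lem:trinomial-identity}, write $a=m+1$ and $b=n-q$. The pairs are $\{ab,\,q(q-1)\}$ and $\{aq,\,b(b-1)\}$. Since the sums are equal, the two pairs coincide iff the products are equal, i.e., iff $ab\,q(q-1)=aq\,b(b-1)$, which reduces to $q-1=b-1$, i.e., $q=b=n/2$. This requires $m$ even and $q=(m+2)/2$; in that case $A_q=U_q$ and $B_q=V_q$, so the collision is trivial. Finally, substituting $q\mapsto n-q$ swaps $(A,B)$ with $(U,V)$ as multisets: it sends $\{ab,q(q-1)\}$ to $\{aq,b(b-1)\}$, and the common factor $K_q$ is symmetric. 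Hence the parameters $q$ and $m+2-q$ describe the same equality with its sides reversed.
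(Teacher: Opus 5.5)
Your monotonicity step fails because the ratio is inverted. From $S_q\propto g(q)/(q!\,(n-q)!)$ one gets
\[
\frac{S_{q+1}}{S_q}=\frac{g(q+1)}{g(q)}\cdot\frac{n-q}{q+1},
\]
not $\frac{q+1}{n-q}$. On the lower half, where $q+1\le n-q$, the factorial factor is therefore at least $1$, while $g(q+1)/g(q)\le1$. The two factors pull in opposite directions, so ``each factor is at most $1$'' proves nothing. Your stated conclusion is also false: $S_q$ is strictly \emph{increasing} up to the center. For example, with $m=5$ one has $S_2=6930+462=7392<S_3=9240+2310=11550$. Your boundary paragraph is self-contradictory as well, since $q+1=n-q$ is the same condition as $n=2q+1$.

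The missing idea is a quantitative comparison showing that the binomial growth beats the decrease of $g$. Using $g(q)-g(q+1)=n-2q-1$, one finds
\[
g(q+1)(n-q)-g(q)(q+1)=(n-2q-1)\bigl(g(q)-(n-q)\bigr).
\]
With $n=m+2$ this equals $(m+1-2q)\bigl((m+1)(m+2)-(q+1)(m+2-q)\bigr)$. The last factor is positive, for instance because $(q+1)(m+2-q)\le(m+1)m$ when $2\le q\le m$. This is exactly the paper's sign computation for $S_{q+1}-S_q$. It gives strict increase for $2q\le m$, equality only at $2q=m+1$, and strict decrease afterwards; the symmetry $S_q=S_{m+2-q}$ then yields the equivalence. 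With this replacement, your organization by the class representative $\min(q,n-q)$ goes through, with ``decreasing'' changed to ``increasing''.

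Your genuineness argument is correct and differs from the paper's. You note that two unordered pairs with equal sums coincide iff their products agree, which reduces at once to $q=b$. The paper instead checks the corresponding and crossed matchings separately, so your route is shorter and avoids the case split. Your treatment of the reversal under $q\mapsto m+2-q$ matches the paper.
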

\begin{proof}
A direct calculation shows that the sign of $S_{q+1}-S_q$ is the sign of
\[
(m+1-2q)\bigl(q^2-qm-q+m^2+2m\bigr).
\]
The second factor, viewed as a quadratic in $q$, has minimum
\[
\frac{3m^2+6m-1}{4}>0.
\]
Thus $S_q$ is strictly increasing up to the center and strictly decreasing after the center. Together with $S_q=S_{m+2-q}$, this proves the first assertion.

Put $t=m-q+2$. After division by $K_q$, the two unordered pairs are
\[
\{(m+1)t,\ q(q-1)\}
\qquad\text{and}\qquad
\{(m+1)q,\ t(t-1)\}.
\]
If corresponding entries agree, then $t=q$. Under crossed matching, division by the positive integers $q,t$ would give both $t=m+2$ and $q=m+2$, impossible for $2\leq q,t\leq m$. Hence the pairs coincide only when $t=q$, that is, when $m$ is even and $q=(m+2)/2$.

Finally, for $q'=m+2-q$, comparison of factorial denominators gives
\[
A_{q'}=U_q,\quad B_{q'}=V_q,\quad C_{q'}=A_q,\quad D_{q'}=B_q,
\]
which proves the last statement.
\end{proof}
\begin{theorem}\label{thm:trinomial-linear}
For every odd $n=2m+1\geq7$,
\[
D_3(n)\geq\left\lfloor\frac{m-1}{2}\right\rfloor
=\left\lfloor\frac{n-3}{4}\right\rfloor.
\]
Consequently,
\[
C_3(n)\geq\left\lfloor\frac{n-3}{4}\right\rfloor.
\]
\end{theorem}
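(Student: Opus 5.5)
We use the family of Lemmas~\ref{lem:trinomial-identity} and~\ref{lem:trinomial-distinct}, restricted to one side of the symmetry $q\leftrightarrow m+2-q$. Let $n=2m+1$ with $m\geq3$. For $2\leq q\leq m$, each of $A_q,B_q,U_q,V_q$ is a multinomial coefficient $(2m+1)!/(e_1!e_2!e_3!)$ with three nonnegative lower entries summing to $2m+1$. Their lower entries are $(m,m-q+1,q)$, $(m+1,m-q+2,q-2)$, $(m,m-q+2,q-1)$ and $(m+1,m-q,q)$. These are all nonnegative because $q\geq2$ and $q\leq m$. Hence all four values lie in $\mathcal{T}_n=\mathcal{M}_{n,3}$, and each sum $S_q$ lies in $\mathcal{T}_n+\mathcal{T}_n$.

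Restrict to the parameters $Q=\{q:2\leq q<(m+2)/2\}$. For $q\in Q$ we have $q\neq(m+2)/2$, so by Lemma~\ref{lem:trinomial-distinct} the identity $A_q+B_q=U_q+V_q$ is a genuine collision. That is, $\{A_q,B_q\}\neq\{U_q,V_q\}$ as unordered pairs, so $r_{\mathcal{T}_n}(S_q)\geq2$. Two distinct $q,r\in Q$ satisfy $q+r<m+2$, so the first assertion of Lemma~\ref{lem:trinomial-distinct} gives $S_q\neq S_r$. The elements of $Q$ therefore yield $|Q|$ distinct multiply represented sums, and $D_3(n)\geq|Q|$.

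It remains to count $Q$. If $m$ is odd, $Q=\{2,\ldots,(m+1)/2\}$ has $(m-1)/2$ elements. If $m$ is even, $Q=\{2,\ldots,m/2\}$ has $(m-2)/2=\lfloor(m-1)/2\rfloor$ elements. In both cases $|Q|=\lfloor(m-1)/2\rfloor$. Since $m=(n-1)/2$, this equals $\lfloor(n-3)/4\rfloor$. The hypothesis $n\geq7$ gives $m\geq3$, so the lemmas apply and the count is at least $1$.

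For the consequence, Proposition~\ref{prop:basic-relations} gives $C(A)=\sum_{k\geq2}T(A,k)\geq T(A,2)=D(A)$. Hence $C_3(n)\geq D_3(n)$.

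The only step needing care is genuineness: we must make sure we have not landed on the degenerate central parameter. This is exactly what the strict inequality $q<(m+2)/2$ excludes, so it is handled entirely by Lemma~\ref{lem:trinomial-distinct}.
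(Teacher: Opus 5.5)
Your proof is correct and is essentially the paper's argument: the set $Q=\{q:2\leq q<(m+2)/2\}$ picks exactly one representative from each nontrivial orbit of the involution $q\mapsto m+2-q$ on $\{2,\ldots,m\}$. Genuineness of each collision and distinctness of the sums $S_q$ then come from Lemmas~\ref{lem:trinomial-identity} and~\ref{lem:trinomial-distinct}, as in the paper, and your explicit count of $Q$ and the step $C_3(n)\geq D_3(n)$ via Proposition~\ref{prop:basic-relations} just spell out what the paper states in one line.
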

\begin{proof}
The involution $q\mapsto m+2-q$ acts on $\{2,\ldots,m\}$. By Lemmas~\ref{lem:trinomial-identity} and~\ref{lem:trinomial-distinct}, its nontrivial orbits give distinct genuine colliding sums. There are $\lfloor(m-1)/2\rfloor$ such orbits. Each contributes at least one to the collision excess.
\end{proof}
Every additive identity among binomial coefficients in row $r$ lifts to a trinomial identity of every order $n\geq r$, since
\[
\binom{n}{n-r,r-k,k}=\binom nr\binom rk.
\]
Thus the trinomial problem receives both the intrinsic family above and lifted families from Pascal's triangle.

\section{Exact computations, OEIS connections, and open problems}\label{sec:computations}

\subsection{Exact Sidon-subset computations}
For a finite set $A$, form a hypergraph $\mathcal{H}(A)$ whose vertex set is $A$, and whose hyperedges are the supports of nontrivial equalities
\[
x+y=u+v
\]
between distinct unordered pairs from $A$. A subset of $A$ is Sidon exactly when it is independent in $\mathcal{H}(A)$. Consequently, $M(n)-s(n)$ is the transversal number of $\mathcal{H}(\mathcal{M}_n)$.

The distinct values in $\mathcal{M}_n$ were generated, all inclusion-minimal collision supports were formed, and the resulting minimum-transversal problem was solved by a SAT encoding with a cardinality constraint. A vertex is called active if it belongs to at least one minimal forbidden support. The computation was completed in Maple 2026.1 (Build ID 2018217) on Windows 11 Enterprise, using the \texttt{maplesat} backend. For $n=16$, the reduced instance has $|\mathcal{M}_{16}|=157$, 146 active vertices, and 2581 minimal forbidden supports. The instance with 74 deletions is satisfiable, whereas the instance with 73 deletions is unsatisfiable. Hence
\[
s(16)=157-74=83.
\]
The exact values are
\begin{center}
\small
\begin{tabular}{c|rrrrrrrrrrrrrrrr}
$n$&1&2&3&4&5&6&7&8&9&10&11&12&13&14&15&16\\
\hline
$s(n)$&1&2&3&5&6&8&11&15&20&25&30&39&45&55&66&83
\end{tabular}
\end{center}

For $n=5$,
\[
\mathcal{M}_5=\{1, 5, 10, 20, 30, 60, 120\},
\qquad
10+30=20+20.
\]
The set
\[
\{1, 5, 20, 30, 60, 120\}
\]
is Sidon, which gives the elementary verification $s(5)=6$.

A separate search found a Sidon subset of $\mathcal{M}_{17}$ with 89 elements, so $s(17)\geq89$; no optimality claim is made.

\subsection{Pair-sum data}
The first values of the principal collision statistics are as follows.
\begin{center}
\begin{tabular}{rrrrr}
\toprule
$n$&$M(n)$&$C(n)$&$D(n)$&$\mu(n)$\\
\midrule
1&1&0&0&1\\
2&2&0&0&1\\
3&3&0&0&1\\
4&5&0&0&1\\
5&7&1&1&2\\
6&11&6&6&2\\
7&14&8&8&2\\
8&20&24&24&2\\
9&27&43&42&3\\
10&36&74&68&3\\
11&47&148&128&4\\
12&64&245&198&4\\
13&79&441&336&5\\
14&102&753&557&6\\
15&125&1178&781&6\\
16&157&1742&1119&7\\
17&193&2605&1623&8\\
18&243&3805&2311&9\\
19&296&5589&3321&9\\
20&366&8256&4809&10\\
\bottomrule
\end{tabular}
\end{center}
For example,
\[
\mathcal{M}_6=\{1, 6, 15, 20, 30, 60, 90, 120, 180, 360, 720\}.
\]
There are $\binom{12}{2}=66$ unordered pairs with repetition but only 60 distinct sums, so $C(6)=6$. At $n=9$, the sum 1764 has three representations,
\[
84+1680=252+1512=504+1260,
\]
which explains the first strict inequality $C(n)>D(n)$.

The sequences $s(n)$, $C(n)$, $D(n)$, and $\mu(n)$ are OEIS A398429, A398233, A398238, and A398263, respectively. The cumulative profile is recorded in A398265. The variable-number triangles $C(n,v)$ and $D(n,v)$ are A398232 and A398237, and the trinomial statistics $C_3(n)$ and $D_3(n)$ are A398230 and A398231~\cite{OEIS}.

\subsection{Scope of the computational checks}
The ancillary Maple program for $s(n)$ constructs the collision hypergraph and performs the SAT computations. A second Maple program generates $\mathcal{M}_{n,v}$ and the collision statistics, reproduces the principal table through $n=20$, checks the last-three-column stabilization and the strict preceding increase through $n=20$, verifies the trinomial family through the half-order parameter $m=80$, and verifies the arithmetic-progression construction through $q=10$. An independent standard-library Python implementation performs the same pair-sum checks. All these computations use exact integer arithmetic. The asymptotic theorems and the symbolic proofs of the finite constructions do not depend on the finite audits. No solver-independent certificate for the SAT optimality computations is presently included.

\subsection{Open problems concerning Sidon subsets}
\begin{question}
What is the order of magnitude of $s(n)$? Is the logarithmic-exponential scale in Theorem~\ref{thm:main-lower} optimal?
\end{question}
\begin{question}
Does there exist an absolute constant $K>0$ such that $s(n)=O(n^K)$?
\end{question}
\begin{question}
Determine $s(17)$ and compute further exact values.
\end{question}
\begin{question}
Determine the order of magnitude of $M(n)-s(n)$.
\end{question}
\begin{question}
Is $b(n)=(1-o(1))s(n)$? More generally, determine the asymptotic behavior of $b(n)/s(n)$, which lies between $1/3$ and $1$ by Corollary~\ref{cor:b-vs-s}.
\end{question}

\subsection{Open problems concerning pair-sum multiplicities}
\begin{question}
Determine the true orders of magnitude of $C(n)$, $D(n)$, and $\mu(n)$. Are the lower bounds in Theorem~\ref{thm:asymptotic-main} of the correct scale?
\end{question}
\begin{question}
Does either ratio
\[
\frac{C(n)}{n^2\log n},\qquad
\frac{D(n)}{n^{3/2}\sqrt{\log n}}
\]
have a limit, or even a finite positive limsup?
\end{question}
\begin{question}
Is there an upper limit shape for the normalized profile $k\mapsto T(n,k)$? How close is $\Phi$ to the true lower envelope?
\end{question}
\begin{question}
Determine the order of magnitude of $E^+(\mathcal{M}_n)$.
\end{question}
\begin{question}
For fixed $v\geq3$, determine the growth of the $v$-nomial collision statistics $C(n,v)$ and $D(n,v)$.
\end{question}
\begin{question}
Classify primitive pair-sum identities among multinomial coefficients, where primitive means that the identity is not obtained from a lower order by the product embedding or by appending parts equal to $1$.
\end{question}
\section{AI-assisted research disclosure}\label{sec:ai}
The mathematical exploration and preparation of this manuscript were carried out with substantial assistance from OpenAI's ChatGPT, principally GPT-5.6 Thinking, during July and August 2026. The system was used interactively to explore conjectures and generalizations, propose definitions and intermediate claims, develop and revise proof arguments, identify possible gaps and notation conflicts, assist with symbolic calculations, produce and review Maple and Python code, design computational audits, and draft and edit the manuscript.

The author selected the research questions and final statements, supplied and interpreted the motivating OEIS data, directed the successive investigations, evaluated proposed approaches, executed the stated Maple audits, reviewed the Python verification and its output, and checked the complete manuscript and computational package. The author reviewed every theorem and proof presented here and accepts responsibility for the submitted work. Some advanced arguments were nevertheless developed with substantial AI assistance and were not independently rediscovered or reconstructed by the author from first principles without that assistance. This limitation is disclosed explicitly so that readers and referees can assess the provenance and verification status of the work. Any error remains the author's responsibility, and the author undertakes to investigate and correct issues that may be identified.

\section*{Code and data availability}
The ancillary archive contains the Maple sources for the Sidon and pair-sum computations, the completed SAT console output for $n=14, 15, 16$, a computational supplement describing the SAT encoding and environment, an independent Python verification, the recorded successful audit outputs, and a README describing reproduction and scope. The exact filenames are listed in the README. No external dataset is required.

\section*{Acknowledgments}
The author thanks the OEIS community for providing the setting in which the motivating sequences were developed.

\end{document}